\newtheorem{theorem}{Theorem}
\newtheorem{proposition}[theorem]{Proposition}%
\newtheorem{lemma}[theorem]{Lemma}%
\newtheorem{corollary}[theorem]{Corollary}%
\theoremstyle{remark}%
\theoremstyle{definition}%
\newtheorem{definition}{Definition}%
\begin{document}

\title[Strict refinement property of connected loop-free categories]{Strict refinement property of connected loop-free categories}


\author[1]{\fnm{Aly-Bora} \sur{Ulusoy}}\email{aly-bora.ulusoy@polytechnique.edu}

\author[1]{\fnm{Emmanuel} \sur{Haucourt}}\email{haucourt@lix.polytechnique.fr}


\affil[1]{\orgdiv{Cosynus}, \orgname{\'Ecole Polytechnique}, \orgaddress{\city{Palaiseau}, \postcode{91120}, \country{France}}}




\abstract{
    In this paper we study the strict refinement property for connected partial orders 
    also known as 
    Hashimoto's Theorem.
    This property implies that any isomorphism between products of irreducible structures is determined
    is uniquely determined as a product of isomorphisms between the factors. 
    This refinement implies a sort of smallest possible decomposition for such structures.
    After a brief recall of the necessary notion we prove that Hashimoto's theorem can be 
    extended to connected loop-free categories, \ie categories with no 
    non-trivial morphisms endomorphisms.
    A special case of such categories is the category of connected 
    components, for concurrent programs without loops.
    }

\keywords{loop-free categories, refinement property, concurrent programs}


\pacs[MSC Classification]{06A06}

\maketitle

\emph{Connected} posets (i.e. those in which for all elements \(x\) and \(y\) we 
have a sequence \(x=z_0,z_1,\ldots,z_n=y\) such that \(z_i\) and \(z_{i-1}\) are 
\emph{comparable} for every \(i\in\{1,\ldots,n\}\)) satisfy the \emph{strict refinement property} 
\cite{schroder2003ordered}:
if the following isomorphism holds in the category of posets (\(\poset\)) 
\[
\prod_{\alpha \in A}X_\alpha\quad\cong\quad\prod_{\beta \in B}Y_\beta
\] 
then we have a family of posets \(\big\{ Z_{\alpha,\beta}\:\big|\:\alpha\in A\:;\:\beta\in B \big\}\) 
such that the following isomorphisms hold for every \(\alpha\in A\) and every \(\beta\in B\).
\[
X_\alpha\quad\cong\quad\prod_{b\in B}Z_{\alpha,b}
\qquad\text{and}\qquad
Y_\beta\quad\cong\quad\prod_{a\in A}Z_{a,\beta} 
\]

A (small) category is said to be \emph{loop-free} when any two of its morphisms 
whose composite is an endomorphism are identities.
The category of loop-free categories (\(\lfcat\)), which is a full subcategory of 
the category of small categories (\(\cat\)), contains \(\poset\) as a full subcategory.
The embedding \(\poset\hookrightarrow\lfcat\) has a left adjoint obtained by identifying 
any two arrows with the same source and the same target.
The purpose of this article is to prove that \emph{connected loop-free categories} also 
satisfy the strict refinement property:
\medskip

\begin{restatethis}{theorem}{thm:big-result}
    \label{thm:big-result}
    Given a connected loop-free category $\Cat{C}$, for any two decompositions 
    $\Psi_A : \Cat{C} \to \prod_{\alpha \in A}X_\alpha$
    and $\Psi_B : \Cat{C} \to \prod_{\beta \in B}Y_\beta$, there exists a family of 
    loop-free categories \(Z_{\alpha,\beta}\) with \(\alpha\in A\) and \(\beta\in B\), 
    and for every \(\alpha\in A\) and \(\beta\in B\), decompositions 
    \[a_\alpha\::\:X_\alpha\to\prod_{\beta\in B} Z_{\alpha,\beta}
		\quad\text{ and }\quad
		b_\beta\::\:Y_\beta\to\prod_{\alpha\in A} Z_{\alpha,\beta}
		\] 
such that 
the following diagram commutes:
\begin{equation}
    \label{diag:hashi-global}
    \begin{tikzcd}[ampersand replacement=\&]
        \&  \& \mathcal{C} \arrow[lldd, "\Psi_A"'] \arrow[rrdd, "\Psi_B"] \&  \&  \\
        \&  \&                                                            \&  \&  \\
        \prod_{\alpha\in A} X_\alpha \arrow[ddd, "(a_\alpha)_{\alpha\in A}"', dashed] \arrow[rrrr, "\Psi_B\circ\Psi^{-1}_A"] \&  \&
            \&  \& \prod_{\beta \in B} Y_\beta \arrow[ddd, "(b_\beta)_{\beta\in B}", dashed] \\
        \&  \&                                                            \&  \&  \\
        \&  \&                                                            \&  \&  \\
        {\prod_{\alpha\in A} \prod_{\beta \in B} Z_{\alpha,\beta}} \arrow[rrrr, "\gamma"]  \&  \&
            \&  \& { \prod_{\beta \in B} \prod_{\alpha\in A} Z_{\alpha,\beta}}
    \end{tikzcd}
\end{equation}
    where $\gamma : \prod_{\alpha \in A}\prod_{\beta \in B} Z_{\alpha,\beta} \to \prod_{\beta \in B}\prod_{\alpha \in A} Z_{\alpha,\beta}$
    is the natural isomorphism
    sending $((z_{\alpha,\beta})_{\beta \in B})_{\alpha \in A}$ to $((z_{\alpha,\beta})_{\alpha \in A})_{\beta \in B}$.
\end{restatethis}

The structure of our proof diverges from Hashimoto's original paper \cite{hashimoto1951direct}
and instead follows more closely the presentation found in
\cite[Chapter 10]{schroder2003ordered} which has the added advantage of also constructing the elements of the
refinement.

Beyond its pure theoretical interest, this problem is related to the study of programs made 
of several sequential processes running in parallel \cite{dijkstra_1968} 
(in the sequel we just write `program').
The geometric model \(\llbracket P\rrbracket\) of a program \(P\) is 
\emph{locally ordered} \cite[Corollary 6.1]{haucourt_2018}, so each of its points has a neighborhood whose 
\emph{fundamental category} \cite[4.37, p.73]{datc} is loop-free.
The crucial facts are: 
\begin{enumerate} 
\item the functor \({\vec\pi}_1:{\mathbf C}\to\cat\), which assigns to every object of \(\mathbf C\) its fundamental category, preserves finite products, and
\item two programs \(P\) and \(Q\) do not interact with each other when we have \(\llbracket P|Q\rrbracket\cong\llbracket P\rrbracket\!\times\!\llbracket Q\rrbracket\) (in \(\mathbf C\)) \cite[Theorem 6.2]{haucourt_2018}. 
\end{enumerate}
Hence splitting a program \(P\) into subprograms that are executed independently of each other is somewhat  related to writing the loop-free categories \({\vec\pi}_1(V)\) as finite product, for \(V\) partially ordered open subspace of \(\llbracket P\rrbracket\). 
We expect that these local decompositions actually induce a global decomposition of \({\vec\pi}_1(\llbracket P\rrbracket)\); our hope is based on the van Kampen theorem \cite[4.52, p.80]{datc}.  

\section{Definitions}
    \label{sec:definitions}

    Given an object~$X$ of a category $\Cat{C}$, we write $\id{X}$ for the identity
    morphism on $X$. We generally omit the subscript when clear from the
    context. Given a morphism $f:X\to Y$, we write $\dom{f}=X$ (\resp $\codom{f}=Y$)
    for its \emph{source} (\resp \emph{target}).
    \medskip
    
    \begin{definition}
      \label{def:return}
      A morphism $f\colon x \to y$ in a category $\mathcal{C}$ is said to be
      \emph{without return} when the hom-set $\mathcal{C}(y,x)$ is empty. Otherwise,
      we say that $f$ \emph{admits a return}.
      A category $\mathcal{C}$ is said to be \emph{loop-free} when all its
      morphisms, except identities, are without return.
      We write $\lfcat$ for the category of all small loop-free categories.
    \end{definition}
    \medskip
    
    \begin{definition}
      In $\lfcat$, consider a family
      $\set{f_\alpha : X_\alpha \to Y_\alpha}_{\alpha \in A}$ of morphisms. The
      \emph{product map}
      $f : \prod_{\alpha \in A}X_\alpha \to \prod_{\alpha \in A}Y_\alpha$ is the
      unique morphism such that $\pi_\alpha f = f_\alpha$ for
      every $\alpha\in A$.
    \end{definition}
    \medskip
    
    For the rest of the chapter we will often write $f = (f_\alpha)_{\alpha \in A}$
    for the product map of the family $\set{f_\alpha : X_\alpha \to Y_\alpha}_{\alpha \in A}$.
    For binary products, we write $(f,g)$ the product map of $f$ and $g$.
    
    \begin{definition}
      A \emph{fence} is a family $(f_i)_{1\leq i \leq n}$ of morphisms of $\Cat{C}$ such that:
      \begin{itemize}
          \item $\codom{f_{2i}} = \codom{f_{2i+1}}$ and $\dom{f_{2i-1}} = \dom{f_{2i}}$
          \item or $\dom{f_{2i}} = \dom{f_{2i+1}}$ and $\codom{f_{2i-1}} = \codom{f_{2i}}$
      \end{itemize}
      The \emph{length} of a fence $(f_i)_{1\leq i \leq n}$ is its cardinal $n$
    \end{definition}
    \medskip
    
    \begin{definition}
      Two morphisms $f$ and $g$ of a category $\Cat{C}$ are said to be \emph{connected} 
      if there exists a fence $(f_i)_{i\in [1,n]}$ such that $f=f_1$ and \(f_n=g\).
      A category $\Cat{C}$ is said to be connected if all its morphisms are connected.
    \end{definition}
    \medskip

    \begin{definition}
      A \emph{(product) decomposition}
      of a loop-free category $\Cat{C}$ is an isomorphism $\Psi$ from $\Cat{C}$ to a 
      product $\prod_{\alpha \in A} X_\alpha$ of loop-free categories $X_\alpha$ for $\alpha \in A$.
    \end{definition}
    \medskip
    
    Note that if $\Cat{C}$ is non-empty, connected, and loop-free, then so are the categories $X_\alpha$.

\section{Hashimoto's theorem for loop-free categories}

Hashimoto's Theorem \cite{hashimoto1951direct} states that every connected partial order
has the strict refinement property; in this article, we generalize this result to all 
connected loop-free categories in \cref{thm:big-result}.


As previously stated, we will follow the proof from \cite{schroder2003ordered}, generalizing
the key lemmas to the case of connected loop-free categories.
We will first give a rough idea of the different steps of the proofs, before introducing the details and technical
lemmas necesssary for the proof in \cref{sec:technical}.

To prove \cref{thm:big-result} we will need to find the decompositions 
$(a_\alpha)_{\alpha \in A} $ and $(b_\beta)_{\beta \in B}$ of \cref{diag:hashi-global}.
For this we will first define some notions, that will be used in the presentation of the proof.
\medskip

\begin{definition}
    \label{def:slice}
    Given an element \(s\) of a set product 
    \(\prod_{i\in I}S_i\), an index \(j\in I\), and an element 
    \(x_j\in S_j\), we define \((s,x_j,j)_{i\in I}\) as the element of \(\prod_{i\in I}S_i\) obtained by substituting \(x_j\) to \(s_j\) in \(s\); in other words:
        \[(s,x_j,j)_i = \begin{cases}
            x_j \quad \text{if } i = j \\
            s_i \quad \text{otherwise.}
        \end{cases}\]
\end{definition}
    \medskip

\begin{definition}
    \label{def:section}
    Given an object $s$ of a loop-free category 
    $\prod_{\alpha \in A}X_\alpha$, and an index $\lambda \in A$, 
    the \emph{\(\lambda\)-section at $s$} is the functor from \(X_\lambda\) to \(\prod_{\alpha \in A}X_\alpha\) defined by 
    \[
    \left\{
    \begin{array}{lcll}
             x_\lambda & \mapsto & (s,x_\lambda,\lambda) & \text{if \(x_\lambda\) is an object} \\[1mm]
             f_\lambda & \mapsto & (\id{s},f_\lambda,\lambda) & \text{if \(f_\lambda\) is a morphism} 
        \end{array}\right.  \]
    \color{black}
    We denote by \(\Xi_\lambda^s:X_\lambda\cong X^s_\lambda\) the isomorphism induced by the \(\lambda\)-section at \(s\) on its image. 
    We denote by \(\Phi^s_\lambda\) the restriction of \(\Phi\) to \(X^s_\lambda\) 
    for every functor \(\Phi\) defined over \(\prod_{\alpha \in A}X_\alpha\). 
\end{definition}
    \medskip

\begin{definition}
    \label{def:alpha-beta}
    Given an isomorphism $\Phi \colon  \prod_{\alpha \in A } X_\alpha \to \prod_{\beta \in B} Y_\beta$, 
    indexes $\lambda \in A$, $\mu \in B$, and an object $s \in \prod_{\alpha \in A} X_\alpha$, we define
        \[X_{\lambda}^{\mu} \coloneqq \pi_{\lambda}\Phi^{-1}\Xi^{\Phi(s)}_{\mu}(Y_{\mu}) 
        \quad \text{and} \quad Y_{\mu}^{\lambda} \coloneqq \pi_{\mu}\Phi\Xi^s_{\lambda}(X_{\lambda})\]
    with \(\pi_{\lambda}:\prod_{\alpha \in A } X_\alpha \to X_{\lambda}\) and 
     \(\pi_{\mu}:\prod_{\beta \in A } X_\beta \to X_{\mu}\) the projections.
\end{definition}
    \medskip

As we will see, the refinement (but not its existence) 
\(Z_{\alpha,\beta}\) (with \(\alpha\in A\) and \(\beta\in B\)) depends on an object 
\(s\in\prod_{\alpha\in A}X_\alpha\) that we arbitrarily fix now if the product is non-empty.
In \cite{schroder2003ordered}, which proves the
strict refinement property for connected posets, the isomorphisms
\((a_\alpha)_{\alpha \in A} \) and \((b_\beta)_{\beta \in B}\)
of Diagram~\ref{diag:hashi-global} are obtained by decomposing the expected isomorphisms
into smaller morphisms, obtained with the notation introduced above, which gives the 
Diagram~\ref{diag:hashimoto-detailled} below.

\begin{equation}
    \begin{tikzcd}[]
        \label{diag:hashimoto-detailled}
        {\prod_{\alpha \in A}X_\alpha} &&& {\prod_{\beta \in B}Y_\beta} \\
        \\
        {\prod_{\alpha \in A}X_\alpha^s} &&& {\prod_{\beta\in B}Y_\beta^{\phi(s)}} \\
        \\
        {\prod_{\alpha \in A}\prod_{\beta\in B}Y_\beta^\alpha} \\
        \\
        {\prod_{\alpha\in A}\prod_{\beta\in B}X^\beta_\alpha} &&& {\prod_{\beta \in B}\prod_{\alpha \in A}X_\alpha^\beta}
        \arrow["\Phi", from=1-1, to=1-4]
        \arrow["{(\Xi_\alpha^s)_{\alpha\in A}}"', from=1-1, to=3-1]
        \arrow["{(\Phi|_{X^s_\alpha})_{\alpha\in A}}"', from=3-1, to=5-1]
        \arrow["{((\pi_\alpha \Phi^{-1} \Xi_\beta^{\Phi(s)})_{\beta \in B})_{\alpha \in A}}"', from=5-1, to=7-1]
        \arrow["{(\Xi_\beta^{\Phi(s)})_{\beta \in B}}", from=1-4, to=3-4]
        \arrow["{(\Phi^{-1}|_{X^{\Phi(s)}_\beta})_{\beta\in B}}", from=3-4, to=7-4]
        \arrow["\gamma^{-1}"', from=7-4, to=7-1]
    \end{tikzcd}
\end{equation}

The first part of the proof is to prove that all the introduced morphisms, are in fact isomorphisms
and to prove that they are indeed product maps of isomorphisms.
That is to say that we have 
\(\Psi_2 = \underset{\alpha\in A}\prod a_\alpha\) and \(\Psi_1 = \underset{\beta\in B}\prod b_\beta\) with 
\[
    a_\alpha = \bigl(\pi_\alpha \Phi^{-1}\Xi^{\Phi(s)}_\beta
    \pi_\beta\Phi \Xi^s_\alpha\bigr)_{\beta \in B}
     \qquad\text{and}\qquad
    b_\beta = \Phi^{-1} \Xi_\beta^{\Phi(s)}
\]

Most of the proofs rely heavily on the following proposition, which allows us
to restrain the coordinates of composite images.
\medskip

\restate{lem:proj-cont}

\medskip

Then, once this has been achieved, we need to prove that the diagram commutes to conclude the proof.
To do this, we will first prove the commutation on a restriction of the diagram,
by replacing for each \(\alpha\in A\) 
the starting category $X_\alpha$ by the subcategory \(X_\alpha^\mu\) as in \cref{def:alpha-beta},
for an arbitrary $\mu \in \beta$, giving us the diagram below. 

\begin{equation}
    \label{diag:hashi-small}
    \begin{tikzcd}[]
        {\prod_{\alpha \in A}X_\alpha^\mu} &&& {Y_\mu^{\phi(s)}} \\
        \\
        \\
        {\prod_{\alpha\in A}\prod_{\beta\in B}X^\beta_\alpha} &&& {\prod_{\beta \in B}\prod_{\alpha \in A}X_\alpha^\beta}
        \arrow["\Phi", from=1-1, to=1-4]
        \arrow["{\gamma^{-1}}"', from=4-4, to=4-1]
        \arrow["{\Psi_2}"', from=1-1, to=4-1]
        \arrow["{\Psi_1}", from=1-4, to=4-4]
    \end{tikzcd}
\end{equation}


Then using the following \cref{lem:ext-iso},
the commutation is extended along a given $\lambda \in A$.

    

\medskip

\restate{lem:ext-iso}

\medskip

This leads to the commutation of the 
Diagram~\ref{diag:hashi-al-end} below.

\begin{center}
    \begin{equation}
        \label{diag:hashi-al-end}
        \begin{tikzcd}[]
            {\prod_{\alpha\in A\setminus{\lambda}} X^\mu_\alpha \times X_\lambda} &&& {\prod_{\beta \in B} Y_\beta} \\
            \\
            \\
            {\prod_{\alpha\in A}\prod_{\beta\in B} X^\beta_\alpha} &&& {\prod_{\beta \in B}\prod_{\alpha \in A}X_\alpha^\beta}
            \arrow["\Phi", from=1-1, to=1-4]
            \arrow["{\gamma^{-1}}"', from=4-4, to=4-1]
            \arrow["{(\pi_\alpha\Psi_2)_{\alpha\neq \lambda}\times \pi_\lambda\Psi_2 }"', from=1-1, to=4-1]
            \arrow["{\Psi_1 }", from=1-4, to=4-4]
        \end{tikzcd}
    \end{equation}
\end{center}

Using \cref{lem:ext-iso} once more the commutativity can be extended
to the full domain $\prod_{\alpha \in A}X_\alpha$, thus ending the proof.

In our case a few necessary conditions that are less trivial will need to be detailed in \cref{lem:faith-restriction}
to perform the last steps of the proof, but the broad strokes will remain the same.

\section{Preliminary results}
    \label{sec:technical}
    As announced the proof of \cref{thm:big-result} relies on a few technical results and
    properties of loop-free categories that we will first present in this section.
    The \cref{cor:proj-cont} and \cref{lem:ext-iso} are especially important pieces of the inner working of the
    proof.

\subsection{Properties of loop-free categories}

We remind the reader of a few properties of loop-free categories that we will use in the following proofs, 
as well as two usefull properties specific to loop-free categories.

\begin{proposition}
    $\lfcat$ has all products.
\end{proposition}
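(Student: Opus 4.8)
The plan is to deduce the statement from the fact that the category $\cat$ of small categories has all products, together with the observation that $\lfcat$ is a \emph{full} subcategory of $\cat$. Recall that the product in $\cat$ of a family $\set{X_\alpha}_{\alpha\in A}$ of small categories is the product category $\prod_{\alpha\in A}X_\alpha$ whose objects are the families $(x_\alpha)_{\alpha\in A}$ with $x_\alpha$ an object of $X_\alpha$, whose morphisms are the families $(f_\alpha)_{\alpha\in A}$ with $f_\alpha$ a morphism of $X_\alpha$, with componentwise composition and identities, and whose projections $\pi_\alpha$ return the $\alpha$-th entry. It therefore suffices to check two things: that $\prod_{\alpha\in A}X_\alpha$ is loop-free as soon as each $X_\alpha$ is, and that the universal property carries over to $\lfcat$.

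First I would check closure under products. Assume each $X_\alpha$ is loop-free and let $(f_\alpha)_{\alpha\in A}\colon(x_\alpha)_{\alpha\in A}\to(y_\alpha)_{\alpha\in A}$ be a morphism of $\prod_{\alpha\in A}X_\alpha$ that admits a return $(g_\alpha)_{\alpha\in A}\colon(y_\alpha)_{\alpha\in A}\to(x_\alpha)_{\alpha\in A}$. Then for every $\alpha\in A$ the morphism $f_\alpha\colon x_\alpha\to y_\alpha$ admits the return $g_\alpha\colon y_\alpha\to x_\alpha$, so, $X_\alpha$ being loop-free, $f_\alpha$ is an identity; hence $x_\alpha=y_\alpha$ and $f_\alpha=\id{x_\alpha}$. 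Since this holds for all $\alpha$, the morphism $(f_\alpha)_{\alpha\in A}$ is itself an identity. Thus the only morphisms of $\prod_{\alpha\in A}X_\alpha$ admitting a return are identities, i.e.\ $\prod_{\alpha\in A}X_\alpha$ is loop-free. (When $A=\emptyset$ this product is the terminal category, which is loop-free, so $\lfcat$ also has a terminal object.)

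Then I would transfer the universal property. Given a family $\set{X_\alpha}_{\alpha\in A}$ of loop-free categories, its product $\prod_{\alpha\in A}X_\alpha$ taken in $\cat$ lies in $\lfcat$ by the previous step. For any loop-free category $\Cat{C}$ equipped with functors $h_\alpha\colon\Cat{C}\to X_\alpha$, the universal property in $\cat$ yields a unique functor $h\colon\Cat{C}\to\prod_{\alpha\in A}X_\alpha$ with $\pi_\alpha h=h_\alpha$ for all $\alpha\in A$; as $\lfcat$ is full in $\cat$, this $h$ is a morphism of $\lfcat$, and it is the unique one with that property. Hence $\bigl(\prod_{\alpha\in A}X_\alpha,(\pi_\alpha)_{\alpha\in A}\bigr)$ is a product in $\lfcat$.

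I do not expect any genuine obstacle here: all the content sits in the closure step of the second paragraph, and even there the argument is immediate because loop-freeness is a condition imposed pointwise on hom-sets while products of categories are formed pointwise. The only subtlety worth noting is the empty product, which has to be treated separately as the terminal category.
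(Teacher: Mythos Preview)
Your proof is correct, but it takes a different route from the paper. The paper dispatches the statement in one line by citing that $\lfcat$ is an epireflective subcategory of $\cat$ \cite[Proposition~1.8]{haucourt2006categories}; since reflective subcategories of a category are closed under all limits that exist in the ambient category, and $\cat$ has all products, so does $\lfcat$. You instead argue directly: you verify by hand that the $\cat$-product of loop-free categories is again loop-free (checking componentwise that any morphism with a return is an identity), and then use fullness of the inclusion $\lfcat\hookrightarrow\cat$ to transfer the universal property. Your approach is more elementary and self-contained, requiring no knowledge of reflective subcategories or external references; it also makes the concrete description of the product explicit, which is how the product is actually used throughout the rest of the paper. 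The paper's approach is more conceptual and concise, but hides the closure-under-products step inside a general categorical principle.
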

  
  \begin{proof}
    $\lfcat$ is an epireflective subcategory of $\cat$ (\cite[Proposition 1.8]{haucourt2006categories}),
    a cartesian closed category. Thus, it has all products.
  \end{proof}
  
  By the adjunction between $\lfcat$ and $\poset$, we have the following result:
  
  \begin{proposition}
    \label{prop:func-to-iso}
    Let $\Phi \colon \Cat{C} \to \Dat$ be a morphism of connected loop-free
    categories. Then the restriction
    \[
      \Phi|_\mathsf{Obj} \colon (\mathsf{Obj}(\Cat{C}),\leq) \to (\mathsf{Obj}(\Dat),\leq)
    \]
    is an order-preserving morphism of connected posets
    with the canonical partial order $\leq$ defined by
      \[ X\leq Y \iff \Cat{C}(X,Y) \neq  \emptyset\]
    Furthermore, if $\Phi$ is
    an isomorphism, so is $\Phi|_\mathsf{Obj}$.
  \end{proposition}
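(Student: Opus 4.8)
The plan is to derive the statement straight from the adjunction $L\dashv(\poset\hookrightarrow\lfcat)$ mentioned in the introduction, whose left adjoint $L$ sends a loop-free category to the poset with the same underlying set of objects and with order given precisely by $\leq$; concretely, $L(\Cat{C})$ is the quotient of $\Cat{C}$ obtained by identifying any two parallel arrows.

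First I would record why $\leq$ is a partial order on $\mathsf{Obj}(\Cat{C})$: reflexivity is witnessed by identities and transitivity by composition, while for antisymmetry, given $f\in\Cat{C}(X,Y)$ and $g\in\Cat{C}(Y,X)$ the composite $g\circ f$ is an endomorphism, so \cref{def:return} forces $f$ and $g$ to be identities and hence $X=Y$. This is exactly the fact that $L(\Cat{C})$ is antisymmetric, i.e.\ that $L$ really lands in $\poset$, and the universal property of the ``identify parallel arrows'' quotient is what makes $L$ left adjoint to the inclusion. Since $L$ leaves the objects untouched, for a functor $\Phi\colon\Cat{C}\to\Dat$ the monotone map $L(\Phi)$ is literally the set-theoretic restriction $\Phi|_{\mathsf{Obj}}$, and functoriality of $L$ says it is order-preserving — unpacked, $X\leq Y$ yields some $f\colon X\to Y$, whence $\Phi f\colon\Phi X\to\Phi Y$ and $\Phi X\leq\Phi Y$.

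Next I would verify that $L$ carries connected loop-free categories to connected posets. Given objects $X,Y$ of a connected $\Cat{C}$, the identities $\id{X}$ and $\id{Y}$ are morphisms of $\Cat{C}$, hence connected by a fence $(f_i)_{1\leq i\leq n}$ with $f_1=\id{X}$ and $f_n=\id{Y}$; by the definition of a fence, any two consecutive morphisms share either their source or their target, so the endpoints of the $f_i$, listed in order, form a sequence of successively $\leq$-comparable objects linking $X$ to $Y$. Hence $L(\Cat{C})$ and $L(\Dat)$ are connected, and $\Phi|_{\mathsf{Obj}}$ is an order-preserving map of connected posets.

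For the isomorphism claim, if $\Phi$ is an isomorphism in $\lfcat$ then $\Phi^{-1}$ is again a functor between connected loop-free categories, and applying $L$ gives $L(\Phi)\circ L(\Phi^{-1})=L(\Phi\circ\Phi^{-1})$ and $L(\Phi^{-1})\circ L(\Phi)=L(\Phi^{-1}\circ\Phi)$, both identity maps, so $\Phi|_{\mathsf{Obj}}=L(\Phi)$ is an isomorphism of posets. I do not anticipate any real obstacle: the only points that need care are the antisymmetry argument — the single place where loop-freeness is genuinely used — and the elementary bookkeeping translating a fence of morphisms into a comparability zigzag of objects; everything else is formal functoriality of $L$.
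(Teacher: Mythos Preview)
Your proposal is correct and matches the paper's approach exactly: the paper does not give a proof at all but simply prefaces the proposition with ``By the adjunction between $\lfcat$ and $\poset$, we have the following result,'' and your argument is precisely the unpacking of that adjunction (the left adjoint $L$ collapses parallel arrows, acts as the identity on objects, and functoriality carries isomorphisms to isomorphisms). The only content you add beyond the paper is the explicit verification of antisymmetry and of connectedness via fences, both of which are routine and correct.
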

  
  \begin{lemma}
    \label{lem:tri-id}
    Given two morphisms $f,g$ of a loop-free category $\Cat{C}$,
    we have that $f\circ g = \ident{}$ implies 
    $f = g = \ident{}$
  \end{lemma}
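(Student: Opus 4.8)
The plan is to unwind the definition of loop-freeness directly, without appealing to any of the heavier machinery. Write $g\colon X\to Y$ and $f\colon Y\to X$, so that the hypothesis $f\circ g = \id{X}$ forces the common source and target of this composite to coincide: it is an endomorphism of $X$.

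First I would observe that, in the terminology of \cref{def:return}, the morphism $g$ \emph{admits a return}: indeed $f$ is an element of $\Cat{C}(\codom{g},\dom{g}) = \Cat{C}(Y,X)$, so this hom-set is non-empty. Since $\Cat{C}$ is loop-free, every morphism that is not an identity is without return; contrapositively, $g$ must be an identity. In particular $Y = \dom{g} = \codom{g} = X$ and $g = \id{X}$.

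Then I would substitute back into the hypothesis: $f\circ g = \id{X}$ becomes $f\circ \id{X} = \id{X}$, whence $f = \id{X}$ by the identity law. This yields $f = g = \id{X}$, as required. (One could equally run the symmetric argument first, noting that $g\in\Cat{C}(\dom{f},\codom{f})$ witnesses that $f$ admits a return, so $f = \id{X}$, and then conclude $g = \id{X}$ from $\id{X}\circ g = \id{X}$.)

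There is no genuine obstacle here: the statement is essentially the definition of loop-freeness rephrased in terms of composites rather than hom-sets. The only mild care needed is to bridge the ``without return'' formulation of \cref{def:return} with the ``composite is an endomorphism'' phrasing, which the argument does by exhibiting $f$ (respectively $g$) as an explicit element of the relevant reverse hom-set.
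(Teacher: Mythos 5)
Your proof is correct and follows essentially the same route as the paper's: both arguments observe that $f$ and $g$ each admit a return (each being an explicit element of the other's reverse hom-set) and conclude from loop-freeness that both are identities. Your version merely spells out the hom-set bookkeeping and the final substitution in more detail.
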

  
  \begin{proof}
      If $f\circ g = \ident{}$, then both \(f\) and \(g\) 
      have a return; it follows that both are identities because $\Cat C$ is loop-free.
  \end{proof}

  \begin{lemma}
    \label{lem:id-decomposition}
    Given an isomorphism of small, loop-free categories 
    $\Phi \colon \Cat{C}_1 \times \Cat{C}_2 \to \Cat{D}_1 \times \Cat{D}_2$ and
    an object $X$ of $\Cat{C}_1$, for all morphisms $(f,g)$ of $\Cat{D}_1\times \Cat{D}_2$ such that
    \(\pi_1\Phi^{-1}(f,g) = \id{X}\), we have:
    \[
      \pi_1\Phi^{-1}(f,\id{\codom{g}}) = \pi_1\Phi^{-1}(f,\id{\dom{g}}) = 
      \pi_1\Phi^{-1}(\id{\codom{f}},g) =  \pi_1\Phi^{-1}(\id{\dom{f}},g) = \id{X}.
    \]
  \end{lemma}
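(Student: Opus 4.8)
The plan is to reduce the statement to \cref{lem:tri-id} applied inside the first factor $\Cat{C}_1$. Write $G \coloneqq \pi_1 \circ \Phi^{-1} \colon \Cat{D}_1 \times \Cat{D}_2 \to \Cat{C}_1$; the hypothesis then reads $G(f,g) = \id{X}$, and the four desired equalities are $G(f,\id{\codom{g}}) = G(f,\id{\dom{g}}) = G(\id{\codom{f}},g) = G(\id{\dom{f}},g) = \id{X}$. Before anything else I would note that $\Cat{C}_1$ is loop-free: a non-identity morphism of $\Cat{C}_1$ admitting a return would, after pairing with an identity of $\Cat{C}_2$, yield a non-identity morphism of $\Cat{C}_1 \times \Cat{C}_2$ admitting a return, contradicting loop-freeness.

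The heart of the argument is the pair of coordinatewise factorizations, valid in any product category,
\[
(f,g) \;=\; (\id{\codom{f}},\,g) \circ (f,\,\id{\dom{g}}) \;=\; (f,\,\id{\codom{g}}) \circ (\id{\dom{f}},\,g).
\]
Applying the functor $G$ to the first one gives
\[
\id{X} \;=\; G(f,g) \;=\; G(\id{\codom{f}},g) \circ G(f,\id{\dom{g}}),
\]
where the composite on the right is legitimate because $(f,\id{\dom{g}})$ has codomain $(\codom{f},\dom{g})$, which is exactly the domain of $(\id{\codom{f}},g)$, and functors preserve sources and targets. Since $\Cat{C}_1$ is loop-free, \cref{lem:tri-id} forces both factors to equal $\id{X}$, that is, $G(f,\id{\dom{g}}) = G(\id{\codom{f}},g) = \id{X}$. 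Feeding the second factorization into $G$ and repeating the argument gives $G(\id{\dom{f}},g) = G(f,\id{\codom{g}}) = \id{X}$, which settles the remaining two cases.

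I do not anticipate a real obstacle here: the proof is essentially the two factorizations plus one invocation of \cref{lem:tri-id}. The only things to be careful about are purely formal --- that the morphisms we compose have matching domains and codomains once $\pi_1$ and $\Phi^{-1}$ are applied (immediate, since these are functors), and the degenerate case where $f$ or $g$ is itself an identity, in which case the corresponding factorization collapses and one of the four equalities coincides with the hypothesis.
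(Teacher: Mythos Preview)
Your proof is correct and matches the paper's argument essentially line for line: both use the two factorizations $(f,g) = (\id{\codom f},g)\circ(f,\id{\dom g}) = (f,\id{\codom g})\circ(\id{\dom f},g)$, push them through $\pi_1\Phi^{-1}$, and invoke \cref{lem:tri-id} in the loop-free category $\Cat{C}_1$. The paper presents the two factorizations as a single commuting square and reads off both decompositions from it, while you write them out separately, but the content is identical.
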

  
  \begin{proof}
      By definition of the product, the following diagram on the left commutes:
    \begin{center}
      \begin{tikzcd}[]
        \bullet \ar[rr, "\text{$(\ident{},g) $}"]   &  & \bullet                      \\
        {}\\
        \bullet \ar[uu, "\text{$(f,\ident{}) $}"] \ar[rr, "\text{$(\ident{},g)$}"'] \ar[rruu, "\text{$(f,g)$}"{description}] 
        & & \bullet \ar[uu, "\text{$(f,\ident{})$}"']
      \end{tikzcd}
      \scalebox{1.2}{$\overset{\pi_\Cat{P}\circ\Phi^{-1}}{\implies}$}
      \begin{tikzcd}[]
        {\bullet}\ar[rr, "{\pi_1\Phi^{-1}(\ident{},g) }"]  &  & {\bullet}                     \\
        {} \\
        {\bullet}\ar[uu, "{\pi_1\Phi^{-1}(f,\ident{}) }"] \arrow[rr, "{\pi_1\Phi^{-1}(\ident{},g)}"'] 
        \arrow[rruu, "{id}"{description}] &  & {\bullet}\ar[uu, "{\pi_1\Phi^{-1}(f,\ident{})}"']
      \end{tikzcd}
    \end{center}
  
      The category $\Cat P$ is loop-free, thus by applying \cref{lem:tri-id} we have  
      \[\pi_1\Phi^{-1}(\id{\dom{f}},g) = 
      \pi_1\Phi^{-1}(f,\id{\codom{g}}) = \id{}
      \]
      We prove in the same way that   
      \(\pi_1\Phi^{-1}(\id{\codom{f}},g) = 
      \pi_1\Phi^{-1}(f,\id{\dom{g}}) = \id{}\).
  \end{proof}

\subsection{Technical lemmas}

The proof of Hashimoto's theorem in \cite{schroder2003ordered} makes extensive use of the  
\cref{prop::proj-cont} and
\cref{prop:id-sliding},
which we will respectively extend to connected elements of \(\lfcat\) in 
\cref{lem:proj-cont} and
\cref{lem:ext-iso}.
As we have changed the formulation to make the proofs easier to follow, we give them here and refer to the original work
for the proof.

These lemmas are at the core of the proof and are where the hypothesis that we are using loop-free
and connected categories really comes into play, so it is important to keep them in mind.

\medskip

\begin{proposition} 
    \label{prop::proj-cont}
    Given
    \(\Phi \colon P \times Q \to U \times V\) an isomorphism of connected posets and $p \in P$,
    \[ \pi_\Cat{P}\Phi^{-1}(u,v) = \pi_\Cat{P}\Phi^{-1}(u',v') = p  \text{ implies }
    \pi_\Cat{P}\Phi^{-1}(u,v') = p \]
\end{proposition}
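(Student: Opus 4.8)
The plan is to pass to the inverse isomorphism $\Psi := \Phi^{-1}$ and to the order-preserving map $\psi_P := \pi_P\circ\Psi\colon U\times V\to P$, and to restate the proposition fiberwise. Fix $p\in P$ and set $C := \psi_P^{-1}(p)\subseteq U\times V$. Then ``$\pi_P\Phi^{-1}(u,v)=\pi_P\Phi^{-1}(u',v')=p$'' says precisely that $(u,v)$ and $(u',v')$ both lie in $C$, and the conclusion ``$\pi_P\Phi^{-1}(u,v')=p$'' says $(u,v')\in C$; so the proposition is equivalent to the statement that every fiber $C$ is the product of its projections, $C=\pi_U(C)\times\pi_V(C)$.

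Two elementary facts about such a fiber drive the argument. First, since $\psi_P$ is the first component of an order-isomorphism it is order-preserving, so $C$ is order-convex: if $c\le c'$ in $C$ then the whole interval $\{x : c\le x\le c'\}$ lies in $C$, because $p=\psi_P(c)\le\psi_P(x)\le\psi_P(c')=p$. Second, $\Psi$ restricts to an isomorphism of posets $C\xrightarrow{\sim}\{p\}\times Q$: indeed $\Psi(u,v)=(p,\pi_Q\Psi(u,v))$ for $(u,v)\in C$, and every $(p,q)$ is hit since $\Psi(\Phi(p,q))=(p,q)$ with $\Phi(p,q)\in C$. Since $Q$ is the image of the connected poset $U\times V$ under the order-preserving surjection $\pi_Q\Psi$ (and order-preserving surjections send fences to fences), $Q$ is connected, hence so is $C$ with the order it inherits from $U\times V$.

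The core is a local ``rectangle'' lemma: for comparable $a,b$ in $U$ and comparable $v,v'$ in $V$, if $(a,v)$, $(b,v)$ and $(a,v')$ all lie in $C$ then $(b,v')\in C$. If the two comparabilities have the same orientation, say $a\le b$ and $v\le v'$, then the upper bounds of $\{(a,v'),(b,v)\}$ in $U\times V$ are exactly the pairs $(x,y)$ with $x\ge b$ and $y\ge v'$, so the join $(a,v')\vee(b,v)=(b,v')$ exists; being an order-isomorphism, $\Psi$ preserves existing joins, whence $\Psi(b,v')=\Psi(a,v')\vee\Psi(b,v)$, and since $\Psi(a,v')$ and $\Psi(b,v)$ both have first coordinate $p$ and joins in $P\times Q$ are taken coordinatewise, $\Psi(b,v')\in\{p\}\times Q$, i.e.\ $(b,v')\in C$. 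If the orientations are opposite, say $a\le b$ and $v'\le v$, then $(b,v')$ lies in the interval $\{x : (a,v')\le x\le(b,v)\}$, both of whose endpoints are in $C$, so $(b,v')\in C$ by order-convexity; the two remaining orientations follow by dualizing $U,V,P,Q$, using that $\Psi$ also preserves meets.

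Finally I would globalize using connectedness of $C$: choose a fence in $C$ joining $(u,v)$ to $(u',v')$; by order-convexity it can be refined to a fence each of whose steps changes only one coordinate and whose ``segments'' stay inside $C$; then an induction along this refined fence transports the point of $C$ with first coordinate $u$ from $V$-value $v$ to $V$-value $v'$, applying the rectangle lemma at every step that moves the $V$-coordinate. The main obstacle is exactly this last step: the rectangle lemma is proved for \emph{comparable} first coordinates, whereas in the induction the fixed coordinate $u$ must be compared with the current $U$-coordinate of the fence, and these are only \emph{connected} in $U$, not comparable. Closing that gap---either by upgrading the rectangle lemma to merely connected first coordinates through a secondary fence argument in $U$, or by arranging the fence in $C$ so that only comparable $U$-coordinates ever meet---is the delicate combinatorial heart of the proof, and it is precisely where connectedness cannot be dropped: for a disconnected domain the statement already fails, for instance for a suitable automorphism of a four-element antichain, whose $\psi_P$-fibers are two-element ``diagonals'' rather than products of their projections.
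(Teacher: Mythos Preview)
The paper does not actually prove this proposition: its entire proof is a citation to \cite[Lemma~10.4.5]{schroder2003ordered}. The paper does, however, prove the generalization to loop-free categories in \cref{lem:proj-cont}, and that argument follows a quite different route from yours. There one writes $(u,v)=\Phi(p,q)$ and $(u',v')=\Phi(p,q')$ and inducts on the length of a fence in $Q$ connecting $q$ and $q'$; the base cases (lengths $2$ and $3$) are handled by explicit decompositions, and longer fences are split and recombined. Working on the domain side keeps all the relevant comparabilities under control automatically, since consecutive elements of the fence in $Q$ are comparable by definition.

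Your reformulation---show that each fiber $C=\psi_P^{-1}(p)$ equals $\pi_U(C)\times\pi_V(C)$---is clean, and both the convexity of $C$ and your rectangle lemma are correct. The problem is the one you flag yourself: to push the fixed first coordinate $u$ along a fence in $C$, each application of the rectangle lemma needs $u$ to be \emph{comparable} with the current $U$-coordinate of the fence, and connectedness of $U$ gives no such thing. Neither proposed fix is carried out, and the first (upgrading the rectangle lemma to merely connected first coordinates) is essentially a restatement of the target; a secondary fence in $U$ runs into the same obstruction one level down. As written, the proposal is an incomplete sketch with the crucial combinatorial step missing. If you want to close the gap, the domain-side induction used for \cref{lem:proj-cont} specializes to posets and avoids exactly this difficulty; alternatively, Schr\"oder's original argument in the cited reference proceeds along the same lines.
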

\begin{proof}
    \cite[Lemma 10.4.5]{schroder2003ordered}.
\end{proof}

\begin{proposition}
    \label{prop:id-sliding}
    Given $\Phi \colon P\times Q \to U \times V$ an isomorphism of connected posets,
    if there exists $p\in P$ such that \(\pi_U\Phi(p,q) = \pi_U \Phi(p,q')\), then for
    each $p'\in  P$, \[\pi_U\Phi(p',q) = \pi_U \Phi(p',q')\]
\end{proposition}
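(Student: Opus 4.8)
The plan is to argue entirely in $\poset$. Write $\Phi(p,q)=(f(p,q),h(p,q))$ with $f=\pi_U\Phi$ and $h=\pi_V\Phi$, so the hypothesis reads $f(p_0,q)=f(p_0,q')$ for some $p_0\in P$ and the goal is $f(p,q)=f(p,q')$ for every $p\in P$. First I would observe that $\Phi^{-1}\colon U\times V\to P\times Q$ is again an isomorphism between products of connected posets, so \cref{prop::proj-cont} applies to it as well; this yields an \emph{exchange principle}: whenever $f(p_1,r_1)=f(p_2,r_2)$ one also has $f(p_1,r_1)=f(p_1,r_2)=f(p_2,r_1)=f(p_2,r_2)$. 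Equivalently, every fibre $f^{-1}(u)$ is a combinatorial rectangle $A_u\times B_u\subseteq P\times Q$, and the assertion to be proved is just that the equivalence relation $q_1\sim_p q_2\iff f(p,q_1)=f(p,q_2)$ on $Q$ does not depend on $p$ — and it is enough to test this on the given pair $q,q'$.

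Because $P$ is connected, I would then reduce to showing that the set $S=\{\,p\in P\mid f(p,q)=f(p,q')\,\}$, which contains $p_0$, is stable under replacing an element by a comparable one; chaining along a fence from $p_0$ to an arbitrary $p$ then yields $S=P$. The heart of the matter is thus the local step: given $p\in S$ and $p'\in P$ with $p\le p'$ (the case $p'\le p$ being dual), and setting $u:=f(p,q)=f(p,q')$, I want $f(p',q)=f(p',q')$. Here I would combine monotonicity of $\Phi$ — which yields $u\le f(p',q)$, $u\le f(p',q')$, $h(p,q)\le h(p',q)$ and $h(p,q')\le h(p',q')$ — with monotonicity of $\Phi^{-1}$ and \cref{prop::proj-cont}, the latter letting me realize the mixed pairs $(f(p',q'),h(p',q))$ and $(f(p',q),h(p',q'))$ as $\Phi(p',r)$ and $\Phi(p',r')$ for suitable $r,r'\in Q$; a chase of the resulting comparabilities — propagating equalities along the $Q$-direction where needed, via connectedness of $Q$ and a second use of the exchange principle — should then force $f(p',q)$ and $f(p',q')$ to be comparable, hence equal.

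The hard part will be exactly this coordinate chase: none of $P$, $Q$, $U$, $V$ carries meets or joins, so one cannot just take the infimum of the two $U$-coordinates, and every inference must be routed through $\Phi^{-1}$ and \cref{prop::proj-cont} — which is precisely where connectedness of the factors gets consumed. Carrying it out in full is the content of \cite[Chapter 10]{schroder2003ordered}, so here we only cite it. Its relevance downstream is that the loop-free analogue \cref{lem:ext-iso} will require the same chase, but with its conclusion upgraded from an equality of objects to an equality of morphisms; that upgrade is what the rigidity of loop-free categories (\cref{lem:tri-id}) is for, and it is ultimately what lets \cref{thm:big-result} go through for categories and not merely for posets.
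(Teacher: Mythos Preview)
Your proposal and the paper take the same approach: both defer the actual proof to \cite[Lemma 10.4.8]{schroder2003ordered}, the paper by bare citation and you by a sketch that explicitly hands the coordinate chase back to that reference. The sketch you add --- deriving the exchange principle by applying \cref{prop::proj-cont} to $\Phi^{-1}$, then reducing to a local step along a fence in $P$ --- is correct as far as it goes and is indeed the shape of Schr\"oder's argument.
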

\begin{proof}
    \cite[Lemma 10.4.8]{schroder2003ordered}
\end{proof}

In the two following proofs, as the objects in the commutative diagrams are of no importance, we have omitted them,
replacing them by $\bullet$ when not necessary.

\begin{restatethis}{lemma}{lem:proj-cont}
    \label{lem:proj-cont}
    Let $\Phi \colon \Cat{P} \times \Cat{Q} \to \Cat{U} \times \Cat{V}$ an isomorphism of connected loop-free categories.
    Let $(u,v)$ and $(u',v')$ two morphisms of $\Cat{U} \times \Cat{V}$ and $p$ a morphism of $\Cat{P}$. Then $\pi_\Cat{P}\Phi^{-1}(u,v) = \pi_\Cat{P}\Phi^{-1}(u',v') = p$ implies
    $\pi_\Cat{P}\Phi^{-1}(u,v') = p$.
\end{restatethis}

\begin{proof}
    Given $p,q,q'$ such that $\Phi(p,q) = (u,v)$ and $\Phi(p,q') = (u',v')$. Let us prove that $\pi_\Cat{P}\Phi^{-1}(u,v') = p$.
    We proceed by induction on the length of the fence between $q$ and $q'$.
    \begin{itemize}
        \item A fence of length $2$ implies $\codom{q}= \codom{q'}$ or $\dom{q}= \dom{q'}$.
            First, let us suppose $\codom{q}= \codom{q'}$, the other case being solved dually.\\
            Let us prove that $\pi_\Cat{P}\Phi^{-1}(u,v') = p$.\\
            First, let us define
            \begin{align}
                \label{def:2fence-img}
                \Phi(p,\id{\codom{q}}) := (u^*,v^*) &  & \Phi(\id{\dom{p}},q) := (u_q,v_q) &  & \Phi(\id{\dom{p}},q') := (u_q',v_q')
            \end{align}
            such that:
              \begin{align*}
                  u= u_q\circ u^*   &  & u' = u_q' \circ u^* \\
                  v = v_q \circ v^* &  & v' = v_q' \circ v^*
              \end{align*}
              By the \cref{prop:bin-prod}, the following diagram commutes:\\
              \begin{center}
                  \begin{tikzcd}[]
                      {\bullet} \arrow[rrdd, "{u^*,\ident}"] \arrow[rrrr, "{u,\ident}"] \arrow[rrrrrrrr, "{u,v'}", bend left] \arrow[rrrrdddd, "{u^*,v^*}"', bend right=49] &  &                                                     &  & {\bullet} \arrow[rrdd, "{\ident,v^*}"'] \arrow[rrrr, "{\ident,v'}"]                        &  &                             &  & {\bullet} \\
                      &  &                                                     &  &                                                                           &  &                             &  &    \\
                      &  & {\bullet} \arrow[rruu, "{u_q,\ident}"'] \arrow[rrdd, "{\ident,v^*}"] &  &                                                                           &  & {\bullet} \arrow[rruu, "{\ident,v_q'}"] &  &    \\
                      &  &                                                     &  &                                                                           &  &                             &  &    \\
                      &  &                                                     &  & {\bullet} \arrow[rruu, "{u_q,\ident}"] \arrow[rrrruuuu, "{u_q,v_q'}"', bend right=49] &  &                             &  &
                  \end{tikzcd}
              \end{center}

              Following the outer arrows, we get $$(u,v') = (u_q,v_q') \circ (u^*,v^*)$$\\

              By \cref{def:2fence-img}, $\pi_\Cat{P}\Phi^{-1}(u^*,v^*) = p$.
              We are thus left to prove $\pi_\Cat{P}\Phi^{-1}(u_q,v_q') = \ident$.
              By construction $\codom{v_q} = \codom{v_q'} = \dom{v^*}$ such that by functoriality:
              \begin{align*}
                  \codom{\pi_\Cat{P}\Phi^{-1}(u_q,v_q')} & = \pi_\Cat{P}\Phi^{-1}(\codom{u_q},\codom{v_q'}) \\
                                                         & =  \pi_\Cat{P}\Phi^{-1}(\codom{u_q},\codom{v_q}) \\
                                                         & = \codom{\pi_\Cat{P}\Phi^{-1}(u_q,v_q)}          \\
                  \codom{\pi_\Cat{P}\Phi^{-1}(u_q,v_q')} & = \dom{p}
              \end{align*}

              By \cref{prop:func-to-iso}, $\Phi \colon \Obj{\Cat{P}} \times \Obj{\Cat{Q}} \to \Obj{\Cat{U}} \times \Obj{\Cat{V}}$ is an order-preserving isomorphism of connected posets and such that
              $\pi_\Cat{P}\Phi^{-1}(\dom{u_q},\dom{v_q}) = \pi_\Cat{P}\Phi^{-1}(\dom{u_q'},\dom{v_q'}) = \dom{p}$. Hence,
              \begin{align*}
                  \dom{p} & = \pi_\Cat{P}\Phi^{-1}(\dom{u_q},\dom{v_q'}) &\text{\cref{prop::proj-cont} } \\
                  \dom{p} & = \dom{\pi_\Cat{P}\Phi^{-1}(u_q,v_q')} &\text{by functoriality}
              \end{align*}

              Thus, $\pi_\Cat{P}\Phi^{-1}(u_q, v_q') \in \Cat{P}(\dom{p},\dom{p})$, with $\Cat{P}$ loop-free. This implies $\pi_\Cat{P}\Phi^{-1}(u_q, v_q') = \id{\dom{p}}$, such that
              \begin{align*}
                  \pi_\Cat{P}\Phi^{-1}(u,v') & = \pi_\Cat{P}\Phi^{-1}(u_q,v_q')  \circ  \pi_\Cat{P}\Phi^{-1}(u^*,v^*) \\
                  \pi_\Cat{P}\Phi^{-1}(u,v') & = p
              \end{align*}

        \item 
        Now let us suppose a fence
        \begin{tikzcd}
            {\cdot} \ar[r, "q'"] &[-12pt] {\cdot}  &[-12pt] {\cdot} \ar[l, "r"']  \ar[r, "q"] &[-12pt] {\cdot}
          \end{tikzcd}
        of length $n=3$ between $q$ and $q'$.
              Let
              \begin{align}
                  \label{def:3fence-img}
                  (u,v) := \Phi(p,q)
                   &  & (x,y) := \Phi(p,r) &  & (u',v') := \Phi(p,q')
              \end{align}
              such that $\Phi$ sends the commutative diagram on top to the one below.
              \begin{center}
                  \begin{tikzcd}[]
                      {\bullet} \arrow[rr, "{\ident,q'}"] \arrow[rrdd, "{p,q'}"'] &  & {\bullet} \arrow[dd, "{p,\ident}"] &  & {\bullet} \arrow[ll, "{\ident,r}"']
                      \arrow[lldd, "{p,r}"] \arrow[dd, "{p,\ident}"] \arrow[rrdd, "{p,q}"] &  &    \\
                      &  &                         &  &                                                                                              &  &    \\
                      &  & {\bullet}                      &  & {\bullet} \arrow[ll, "{\ident,r}"] \arrow[rr, "{\ident,q}"']                                                &  & {\bullet}
                  \end{tikzcd}
                  \\[7pt]
                  \begin{align}
                    \label{eq:3fence}
                    \begin{tikzpicture}
                        \draw (0,0) node[rotate=-90] {$\Longrightarrow$};
                        \draw (0,0) node[right] {$\Phi$};
                    \end{tikzpicture}
                  \end{align}
                  \\[7pt]
                  \begin{tikzcd}[]
                      {\bullet} \arrow[rr, "{u_q',v_q'}"] \arrow[rrdd, "{u',v'}"'] &  & {\bullet} \arrow[dd, "{f^*,y^*}"] &  & {\bullet} \arrow[ll, "{u_r',v_r'}"']
                      \arrow[lldd, "{x,y}"] \arrow[dd, "{u^*,v^*}"] \arrow[rrdd, "{u,v}"] &  &    \\
                      &  &                         &  &                                                                                              &  &    \\
                      &  & {\bullet}                      &  & {\bullet} \arrow[ll, "{u_r,v_r}"] \arrow[rr, "{u_q,v_q}"']                                                &  & {\bullet}
                  \end{tikzcd}
              \end{center}
              We are going to use the same method as before, working on each 2-fence inside the 3-fence above.
              For that we'll decompose $(u,v')$ using the Diagram \ref{eq:3fence}.
              By the \cref{prop:bin-prod}, the following diagram commutes:\\
              \begin{center}
                  \begin{tikzcd}[]
                      {\bullet} \arrow[rrdd, "{\ident,v'}"] \arrow[dddd, "{\ident,v_q'}"'] \arrow[rrrr, "{u,v'}"] &  &                                                  &  & {\bullet}                                \\
                      &  &                                                  &  &                                   \\
                      &  & {\bullet} \arrow[rruu, "{u,\ident}"] \arrow[rrdd, "{u^*,\ident}"] &  &                                   \\
                      &  &                                                  &  &                                   \\
                      {\bullet} \arrow[rruu, "{\ident,y^*}"] \arrow[rrrr, "{u^*,y^*}"']                           &  &                                                  &  & {\bullet} \arrow[uuuu, "{u_q,\ident}"']
                  \end{tikzcd}
              \end{center}
              such that
              \begin{align*}
                  (u,v') & = (u_q,\id{\dom{v'}}) \circ (u^*,y^*) \circ (\id{\codom{u^*}},v_q')   &  & \text{By following the outer arrows}  \\
                  (u,v') & = (u_q,\id{\dom{v_r}}) \circ (u^*,y^*) \circ (\id{\codom{u_r'}},v_q') &  & \text{By the Diagram \ref{eq:3fence}}
              \end{align*}

              \begin{itemize}
                  \item Working on the 2-fence $\overset{q}{\gets}\cdot\overset{r}{\to}$, let us prove $\pi_\Cat{P} \Phi^{-1}(u_q,\id{\dom{v_r}}) = \id{}$ \\
                        By \cref{def:3fence-img} we have
                        $\pi_\Cat{P}\Phi^{-1}(u,v) = \pi_\Cat{P}\Phi^{-1}(x,y) = p$. Thus, as proved for 2-fences above
                        \begin{align*}
                            p & = \pi_\Cat{P}\Phi^{-1}(u,y)                                            &  &                                \\
                              & = \pi_\Cat{P}\Phi^{-1}((u_q,v_r)\circ(u^*,v^*))                        &  & \text{Diagram \ref{eq:3fence}} \\
                              & =   \pi_\Cat{P}\Phi^{-1}(u_q,v_r) \circ  \pi_\Cat{P}\Phi^{-1}(u^*,v^*) &  & \text{Functoriality}           \\
                            p & = \pi_\Cat{P}\Phi^{-1}(u_q,v_r) \circ p                                &  & \text{Diagram \ref{eq:3fence}}
                        \end{align*}
                        By loop-free property of $\Cat{P}$, $\pi_\Cat{P}\Phi^{-1}(u_q,v_r) = \id{\dom{p}}$.
                        Which implies by \cref{lem:id-decomposition}, 
                        \[\pi_\Cat{P} \Phi^{-1}(u_q,\id{\dom{v_r}}) = \id{\dom{p}}\]
                        Similarly, $\pi_\Cat{P} \Phi^{-1}(\id{\codom{u_r'}},v_q') = \id{\codom{p}}$.
                        Hence
                        \begin{align*}
                            \pi_\Cat{P} \Phi^{-1}(u,v') & = \pi_\Cat{P} \Phi^{-1}(u_q,\id{\dom{v_r}}) \circ \pi_\Cat{P} \Phi^{-1}(u^*,y^*) \circ \pi_\Cat{P} \Phi^{-1}(\id{\codom{u_r'}},v_q') \\
                                                        & = \id{\dom{p}} \circ \pi_\Cat{P} \Phi^{-1}(u^*,y^*) \circ \id{\codom{p}}                                                             \\
                            \pi_\Cat{P} \Phi^{-1}(u,v') & = \pi_\Cat{P} \Phi^{-1}(u^*,y^*)
                        \end{align*}

                  \item Let us prove now $\pi_\Cat{P} \Phi^{-1}(u^*,y^*) = p$.
                        By commutativity of both projections the central square:
                        \begin{align*}
                            (x,y) & = (u_r\circ u^*, y^* \circ v_r')                                        \\
                                  & = (u_r,\id{\dom{u^*}}) \circ (u^*,y^*) \circ (\id{\codom{u^*}},v_r')    \\
                            (x,y) & = (u_r,\id{\codom{v_r}}) \circ (u^*,y^*) \circ (\id{\codom{u_r'}},v_r')
                        \end{align*}
                        As $\pi_\Cat{P}\Phi^{-1}(u_r,v_r) = \id{\dom{p}}$, by \cref{lem:id-decomposition}, $\pi_\Cat{P} \Phi^{-1}(u_r,\id{\dom{v_r}}) = \id{\dom{p}}$.
                        Similarly, we have  \(\pi_\Cat{P} \Phi^{-1}(\id{\codom{u_r'}},v_r') = \id{\codom{p}}\). Thus
                        \[p = \pi_\Cat{P} \Phi^{-1}(x,y) = \pi_\Cat{P} \Phi^{-1}(u^*,y^*)\]
              \end{itemize}

              Such that
              \begin{align*}
                  \pi_\Cat{P} \Phi^{-1}(u,v') & = p
              \end{align*}
        \item 
        Now let us suppose that the property holds for all fences of length $1 \leq k < n$, with $n>3$. Let us suppose given
        $q,q'$, connected by a fence $(q_i)_{1\leq i \leq n}$.
        We define $(u_i,v_i) \coloneqq  \Phi(p,q_i)$, for each $1 \leq i \leq n$ and 
        \begin{align*}
            (u,v) &\coloneqq (u_1,v_1) = \Phi(p,q_1) = \Phi(p,q) \\
        (u',v') &\coloneqq (u_n,v_n) = \Phi(p,q_n) = \Phi(p,q')
        \end{align*}
        By definition, $((p,q_i))_{1 \leq i \leq n-1}$ and $((p,q_i))_{2 \leq i \leq n}$ are fences of length $n-1$ respectively connecting
        $(p,q_1)$, $(p,q_{n-1})$ and $(p,q_2)$, $(p,q_{n})$. By definition of $(u_i,v_i)$ we have,
        \begin{align*}
            p & = \pi_\Cat{P}\Phi^{-1}(u_1,v_1) = \pi_\Cat{P}\Phi^{-1}(u_{n-1},v_{n-1}) \\
            p & = \pi_\Cat{P}\Phi^{-1}(u_2,v_2) = \pi_\Cat{P}\Phi^{-1}(u_{n},v_{n})
        \end{align*}
        Thus, by induction hypothesis, this implies
        \begin{align}
            \label{eq:bite}
          p&=\pi_\Cat{P}\Phi^{-1}(u_1,v_{n-1})  &  p=& \pi_\Cat{P}\Phi^{-1}(u_2,v_{n})
        \end{align}
        Now we suppose $\dom{q_1} = \dom{q_2}$ ($\codom{q_1} = \codom{q_2}$ can be treated dually). 
        We'll have to proceed differently depending on the symmetry of the fence.
        \begin{itemize}
            \item If $\dom{q_{n-1}} = \dom{q_n}$, by functoriality of $\Phi$, $\dom{u_{n-1}},\dom{v_{n-1}} = \dom{u_n},\dom{v_n}$, such that
                  $\dom{u_2},\dom{v_n} = \dom{u_1},\dom{v_{n-1}}$.
                  This implies that $(u_1,v_{n-1})$ and $(u_2,v_n)$ are connected by a fence of length $2$.
                  Thus, $\pi_{\Cat{Q}}\Phi^{-1}(u_1,v_{n-1})$ and $\pi_{\Cat{Q}}\Phi^{-1}(u_2,v_n)$ are connected by a fence of length $2$.
                  By \cref{eq:bite}, we have $p = \pi_\Cat{P}\Phi^{-1}(u_2,v_n)
                  = \pi_\Cat{P}\Phi^{-1}(u_1,v_{n-1})$.
                  We can then apply our induction hypothesis, such that
                  \begin{align*}
                      p & = \pi_\Cat{P}\Phi^{-1}(u_1,v_n)\\
                      p & = \pi_\Cat{P}\Phi^{-1}(u,v')
                  \end{align*}
                  Which concludes this case.
            \item If $\codom{q_{n-1}} = \codom{q_n}$,  by functoriality of $\Phi$, $\codom{u_{n-1}},\codom{v_{n-1}} = \codom{u_n},\codom{v_n}$.
            Thus, if we define $q_{2,n} = \pi_{\Cat{Q}}\Phi^{-1}(u_2,v_n)$ and $q_{1,n-1} =\pi_{\Cat{Q}}\Phi^{-1}(u_1,v_{n-1})$,
            we get the following fences:
            \begin{center}
                \begin{tikzcd}[]
                    & {\bullet} &                                                          & {\bullet} \\
                    &    &                                                          &   \\
                    {\bullet} \arrow[ruu, "{u_2,v_n}"] &    & {\bullet} \arrow[luu, "{u_1,v_n}"{description}] \arrow[ruu, "{u_1,v_{n-1}}"'] &
                \end{tikzcd}
                \scalebox{1.2}{$\overset{\Phi^{-1}}{\implies}$}
                \begin{tikzcd}[]
                    & {\bullet} &                                                          & {\bullet} \\
                    &    &                                                          &    \\
                    {\bullet} \arrow[ruu, "{p,q_{2,n}}"] &    & {\bullet} \arrow[luu, "{\Phi^{-1}(u_1,v_n)}"{description}] \arrow[ruu, "{p,q_{1,n-1}}"'] &
                \end{tikzcd}
            \end{center}
            Thus, $q_{2,n}$ et $q_{1,n-1}$ are connected by a fence of length $3$.
            We can apply our induction case for $k=3$
            with the fence $(p,q_{1,n-1})$, $(p,\pi_{\Cat{Q}}\Phi^{-1}(u_1,v_n))$, $(p,q_{2,n})$.
            Indeed, we have
            $\Phi(p,q_{1,n-1}) = (u_1,v_{n-1})$ and $\Phi(p,q_{2,n})= (u_2,v_n)$ (\cref{eq:bite}).
            Thus, we get
            \[\pi_\Cat{P}\Phi^{-1}(u_1,v_n) = p\]
        \end{itemize}
              Thus, $\pi_\Cat{P}\Phi^{-1}(u,v) = \pi_\Cat{P}\Phi^{-1}(u',v') = p$ implies  $\pi_\Cat{P}\Phi^{-1}(u,v') = p$, proving the induction step.
    \end{itemize}
\end{proof}

This \cref{lem:proj-cont} is not restricted to binary product 
and easily extends to arbitrary products as shown in the following lemma.
\begin{corollary}
    \label{cor:proj-cont}
    Let $\Phi \colon \prod_{\alpha \in A} X_\alpha \to \prod_{\beta \in B} Y_\beta$ an isomorphism of connected categories.
    Let $ f^i = (f^i_\alpha)_{\alpha \in A}$ a morphism of $\prod_{\alpha \in A} X_\alpha$ for $i=1,2$.
    Then for all $\lambda \in A$ and all $\mu \in B$ such that
    $\pi_\lambda f^1 = \pi_\lambda f^2$
    \[
        \pi_\lambda \Phi^{-1}(\Phi(f^1),\Phi(f^2)_\mu,\mu) = \pi_\lambda f^1 = \pi_\lambda f^2 \]
\end{corollary}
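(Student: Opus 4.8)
The plan is to deduce this from the binary case \cref{lem:proj-cont} by regrouping both products around the two distinguished indices $\lambda$ and $\mu$. First I would fix the canonical isomorphisms
$\sigma\colon\prod_{\alpha\in A}X_\alpha\to X_\lambda\times\prod_{\alpha\in A\setminus\{\lambda\}}X_\alpha$
and
$\tau\colon\prod_{\beta\in B}Y_\beta\to\bigl(\prod_{\beta\in B\setminus\{\mu\}}Y_\beta\bigr)\times Y_\mu$
(the obvious ones, just collecting the $\lambda$\nobreakdash-th, resp.\ $\mu$\nobreakdash-th, coordinate separately), and transport $\Phi$ along them by setting $\Psi\coloneqq\tau\,\Phi\,\sigma^{-1}$. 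This $\Psi$ is then an isomorphism of the shape $\Cat{P}\times\Cat{Q}\to\Cat{U}\times\Cat{V}$ handled by \cref{lem:proj-cont}, with $\Cat{P}=X_\lambda$, $\Cat{Q}=\prod_{\alpha\in A\setminus\{\lambda\}}X_\alpha$, $\Cat{U}=\prod_{\beta\in B\setminus\{\mu\}}Y_\beta$ and $\Cat{V}=Y_\mu$. Since $\prod_{\alpha\in A}X_\alpha\cong\prod_{\beta\in B}Y_\beta$ is connected and loop-free (and if it is empty there is nothing to prove), the regrouped products $\Cat{P}\times\Cat{Q}$ and $\Cat{U}\times\Cat{V}$ are connected and loop-free as well, being isomorphic to them, so \cref{lem:proj-cont} does apply to $\Psi$.

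Next I would record the three bookkeeping identities relating the data of the corollary to that of $\Psi$: by construction of $\sigma$ and $\tau$ one has $\pi_{\Cat{P}}\sigma=\pi_\lambda$, $\pi_{\Cat{V}}\tau=\pi_\mu$, and for any morphisms $g^1,g^2$ of $\prod_{\beta\in B}Y_\beta$ the morphism $\tau^{-1}\bigl(\pi_{\Cat{U}}\tau(g^1),\pi_{\Cat{V}}\tau(g^2)\bigr)$ is precisely $(g^1,g^2_\mu,\mu)$ in the sense of \cref{def:slice}. Then, writing $g^i\coloneqq\Phi(f^i)$ and $p\coloneqq\pi_\lambda f^1$, I would set $(u,v)\coloneqq\bigl(\pi_{\Cat{U}}\tau(g^1),\pi_{\Cat{V}}\tau(g^1)\bigr)$ and $(u',v')\coloneqq\bigl(\pi_{\Cat{U}}\tau(g^2),\pi_{\Cat{V}}\tau(g^2)\bigr)$, so that $\Psi^{-1}(u,v)=\sigma(f^1)$ and $\Psi^{-1}(u',v')=\sigma(f^2)$. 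Composing with $\pi_{\Cat{P}}$ and using $\pi_{\Cat{P}}\sigma=\pi_\lambda$ gives $\pi_{\Cat{P}}\Psi^{-1}(u,v)=\pi_\lambda f^1=p$ and, thanks to the hypothesis $\pi_\lambda f^1=\pi_\lambda f^2$, also $\pi_{\Cat{P}}\Psi^{-1}(u',v')=\pi_\lambda f^2=p$. Applying \cref{lem:proj-cont} to $\Psi$ then yields $\pi_{\Cat{P}}\Psi^{-1}(u,v')=p$. Finally, unwinding $\Psi^{-1}(u,v')=\sigma\,\Phi^{-1}\,\tau^{-1}(u,v')=\sigma\,\Phi^{-1}\bigl(\Phi(f^1),\Phi(f^2)_\mu,\mu\bigr)$ by the third identity and composing once more with $\pi_{\Cat{P}}$ produces exactly $\pi_\lambda\Phi^{-1}\bigl(\Phi(f^1),\Phi(f^2)_\mu,\mu\bigr)=p=\pi_\lambda f^1=\pi_\lambda f^2$, which is the claim.

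I do not expect a genuine obstacle here: the whole argument is a change of bracketing, and what has to be checked is only the three translation identities above, together with the (immediate) fact that connectedness and loop-freeness of the full product pass to its binary regrouping. The one point that deserves to be written out carefully is that the slice notation of \cref{def:slice}, stated for elements of a set-indexed product, is being applied to the morphisms of the product category, that is, to the tuples of their components; spelling this out makes the identification of $(\Phi(f^1),\Phi(f^2)_\mu,\mu)$ with the relevant coordinate of $\Psi$ unambiguous, and the corollary follows.
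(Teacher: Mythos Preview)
Your proposal is correct and follows essentially the same approach as the paper: regroup both indexed products into binary products around the distinguished indices $\lambda$ and $\mu$ via canonical isomorphisms, transport $\Phi$ to an isomorphism $\Psi\colon\Cat{P}\times\Cat{Q}\to\Cat{U}\times\Cat{V}$, and invoke \cref{lem:proj-cont}. The only cosmetic difference is that the paper places $Y_\mu$ in the first factor of the codomain rather than the second; since the hypothesis of \cref{lem:proj-cont} is symmetric in $(u,v)$ and $(u',v')$, both orderings yield the claim, and your choice in fact lines up the mixed pair $(u,v')$ directly with $(\Phi(f^1),\Phi(f^2)_\mu,\mu)$ without needing that symmetry.
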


\begin{proof}
    This follows directly from \cref{lem:proj-cont} by defining:
    \begin{align*}
        P & \coloneqq X_\lambda                                                      &  &  & Q & \coloneqq \prod_{\alpha \in A,\alpha\neq\lambda} X_\alpha &  &  &
        U & \coloneqq  Y_\beta    &  &  &
        V & \coloneqq \prod_{\beta\in B,\mu\neq \beta} Y_\beta
    \end{align*}
    and by defining
    \begin{align*}
        (p,q) &\coloneqq \delta_\lambda^{-1}(f^1) \qquad& (p,q') &\coloneqq \delta_\lambda^{-1}(f^2) \\
        (u,v) &\coloneqq \delta_\mu^{-1}\Phi(f^1) \qquad& (u',v') &\coloneqq \delta_\mu^{-1}\Phi(f^2)
    \end{align*}
    With $\delta_\lambda$ and $\delta_\mu$ the natural isomorphisms:
    \begin{align*}
        \delta_\lambda \colon X_\lambda \times \prod_{\alpha \in A,\alpha\neq\lambda} X_\alpha
        \to  \prod_{\alpha \in A} X_\alpha \qquad
        \delta_\beta \colon 
        Y_\mu
        \times\prod_{\beta\in B,\mu\neq \beta} Y_\beta   \to \prod_{\beta \in B} Y_\beta
    \end{align*}
    We get an isomorphisms $\Psi = \delta_\beta^{-1} \circ \Phi \circ \delta_\lambda \colon P \times Q \to U \times V$.
    Such that $ f^i_\lambda = \pi_\Cat{P}\Psi^{-1}(u,v')$ for $i=1,2$.
\end{proof}

\begin{restatethis}{lemma}{lem:ext-iso}
    \label{lem:ext-iso}
    Let $\Phi \colon \Cat{P}\times \Cat{Q} \to \Cat{U} \times \Cat{V}$ be an isomorphism of connected
    loop-free categories.
    Let $\Cat{Q'}$ a connected subcategory of $\Cat{Q}$. Let $p$ a morphism of $\Cat{P}$.
    Then $\pi_U\Phi(p,q) = \pi_U\Phi(p,q')$ for all $q,q' \in \Cat{Q'}$ implies, for all $p' \in \Cat{P}$,
    $\pi_U\Phi(p',q) = \pi_U\Phi(p',q')$ for all $q,q' \in \Cat{Q'}$.
\end{restatethis}

\begin{proof}
    Let $p$ a morphism of $\Cat{P}$ Let $\Cat{Q'}$ a connected sub-category of $\Cat{Q}$, such that for all
    morphisms $q$, $q'$ in $\Cat{Q'}$, $\pi_U \Phi(p,q) = \pi_U\Phi(p,q')$.
    Let us prove that for all $p'$ morphism of $\Cat{P}$, $\pi_U\Phi(p',q') = \pi_U\Phi(p',q)$ 
    by induction on the length $n$ of a given fence connecting
    $q,q'$.

    \begin{itemize}
        \item $n=1$. Trivial.
        \item $n=2$.
              $\pi_U \Phi(p,q) = \pi_U\Phi(p,q')$ for all
              morphisms $q$, $q'$ in $\Cat{Q'}$ implies
              \begin{align*}
                  \codom{\pi_U \Phi(p,q)}         & = \codom{ \pi_U\Phi(p,q')}                                                    \\
                  \pi_U \Phi(\codom{p},\codom{q}) & = \pi_U\Phi(\codom{p},\codom{q'}) &  & \text{for all } q,q' \in \Mor{\Cat{Q}} \\
                  \pi_U \Phi(\codom{p},Q)         & = \pi_U\Phi(\codom{p},Q')         &  & \text{for all } Q,Q' \in \Obj{\Cat{Q}}
              \end{align*}
              By \cref{prop:func-to-iso} $\Phi|_\mathsf{Obj}$ is an isomorphism between $\Obj{\Cat{P}}\times\Obj{\Cat{Q}}$ and $\Obj{\Cat{U}}\times\Obj{\Cat{V}}$. By
              \cref{prop:id-sliding}
              \begin{align}
                  \label{eq:int-iso-sliding}
                  \text{for all }Q,Q' \in \Obj{\Cat{Q}},  \pi_U\Phi(\codom{p'},Q) = \pi_U\Phi(\codom{p'},Q')
              \end{align}
              Thus for $q\in \Mor{\Cat{Q'}}$ we have
              \begin{align*}
                  \codom{\pi_U\Phi(\id{\codom{p'}},q)} & = \pi_U\Phi(\codom{p'},\codom{q})                        \\
                                                       & = \pi_U\Phi(\codom{p'},\dom{q})            &  & \text{By
                  \cref{eq:int-iso-sliding}}                                                                      \\
                                                       & = \pi_U\Phi(\dom{\id{\codom{p'}}},\dom{q})               \\
                  \codom{\pi_U\Phi(\id{\codom{p'}},q)} & = \dom{\pi_U\Phi(\id{\codom{p'}},q)}
              \end{align*}
            
            As $\Cat{Q'}$ is loop-free, this implies $\pi_U\Phi(\id{\codom{p'}},q) = \ident$.
            Similarly, $\pi_U\Phi(\id{\codom{p'}},q') = \ident$.

              Let us suppose that $\dom{q} = \dom{q'}$, the other case being dual. By the above argument, the following diagrams commute
              \begin{center}

                  \begin{tikzcd}
                      {\bullet} \arrow[dd, "{(\id{\codom{p'}},q)}"'] \arrow[rrdd, "{(p',q)}"]   &[10pt]  &    \\
                      &  &    \\
                      {\bullet} \arrow[rr, "{(p',\id{\dom{q}})}"]                              &[10pt]  & {\bullet} \\
                      &  &    \\
                      {\bullet} \arrow[uu, "{(\id{\codom{p'}},q')}"] \arrow[rruu, "{(p',q')}"'] &[10pt]  &
                  \end{tikzcd}
                  \qquad
                  \scalebox{1.2}{$\overset{\pi_U\Phi}{\implies}$}
                  \qquad
                  \begin{tikzcd}
                      {\bullet} \arrow[dd, "{\id{}}"'] \arrow[rrdd, "{\pi_U\Phi(p',q)}"]   &[10pt]   &    \\
                      &  &    \\
                      {\bullet} \arrow[rr, "{\pi_U\Phi(p',\id{\dom{q}})}"]  &[10pt]  & {\bullet} \\
                      &  &    \\
                      {\bullet} \arrow[uu, "{\id{}}"] \arrow[rruu, "{\pi_U\Phi(p',q')}"'] &[10pt]   &
                  \end{tikzcd}
              \end{center}

              Therefore, $\pi_U\Phi(p',q') = \pi_U\Phi(p',\id{\dom{q}}) = \pi_U\Phi(p',q)$.
        \item Now suppose a fence $(q=q_0,q_1 \cdots, q_n = q')$ and the property true for all integers strictly smaller than $n$.
              Then there is a $n-1$-fence $(q_1, \cdots, q_n)$ and a $1$-fence $(q_0,q_1)$. By induction hypothesis,
              $\pi_U\Phi(p',q_n) = \pi_U\Phi(p',q_1) = \pi_U\Phi(p',q_0) $.
    \end{itemize}
    $\Cat{Q'}$ is connected so for all $p'\in \Cat{P}$, for all $q,q' \in \Cat{Q'}$, $\pi_U\Phi(p',q') = \pi_U\Phi(p',q')$.
\end{proof}

\section{Proof of \cref{thm:big-result}}

For the remainder of section, we will consider that $\Phi$ is an isomorphism of connected loop-free categories,
$(X_\alpha)_{\alpha\in A}$ and $(Y_\beta)_{\beta \in B}$ families of connected loop-free categories.

As stated before, we will first prove that all the morphisms of the Diagram
\ref{diag:hashimoto-detailled} are isomorphisms.

\medskip

\begin{lemma}
    \label{lem:iso-slices}
    Given a category \(X = \prod_{\alpha\in A} X_\alpha\), \(\lambda \in A\), $s$ a morphism of
    \(X\) and
    $\Xi^s_\lambda$ as defined in \cref{def:section},
    then
    \[\pi_\lambda \circ \Xi^s_\lambda = \id{X_\lambda}\]
    We say that $\Xi^s_\lambda$ is a \emph{section} of the canonical projection
    \(\pi_\lambda \colon \prod_{\alpha\in A} X_\alpha \to X_\lambda\).
    Furthermore, $\Xi^s_\lambda$ is a full and faithful functor
\end{lemma}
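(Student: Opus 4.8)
\emph{Proof sketch.} The plan is to treat the section identity and faithfulness as purely formal and to isolate the use of the loop-free hypothesis in the proof of fullness. First I would check that $\Xi^s_\lambda$ is a functor: by \cref{def:slice} it sends $\id{x_\lambda}$ to $(\id{s},\id{x_\lambda},\lambda)$, which is the componentwise identity on $(s,x_\lambda,\lambda)$ and hence $\id{\Xi^s_\lambda(x_\lambda)}$ in $\prod_{\alpha\in A}X_\alpha$, and it respects composition because composition in a product is componentwise, the $\lambda$-components compose by hypothesis, and the remaining components $\id{s_\alpha}$ compose to $\id{s_\alpha}$; thus $\Xi^s_\lambda(g_\lambda)\circ\Xi^s_\lambda(f_\lambda) = (\id{s},\,g_\lambda\circ f_\lambda,\,\lambda) = \Xi^s_\lambda(g_\lambda\circ f_\lambda)$ whenever $g_\lambda\circ f_\lambda$ is defined. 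The section identity is then a one-line computation from \cref{def:slice}: $\pi_\lambda\Xi^s_\lambda(x_\lambda) = (s,x_\lambda,\lambda)_\lambda = x_\lambda$ on objects and $\pi_\lambda\Xi^s_\lambda(f_\lambda) = (\id{s},f_\lambda,\lambda)_\lambda = f_\lambda$ on morphisms, so $\pi_\lambda\circ\Xi^s_\lambda = \id{X_\lambda}$. Faithfulness follows immediately, since a functor admitting $\pi_\lambda$ as a left inverse is injective on objects and on each hom-set.

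For fullness I would fix objects $x_\lambda,y_\lambda$ of $X_\lambda$ and an arbitrary morphism $h = (h_\alpha)_{\alpha\in A}$ of $\prod_{\alpha\in A}X_\alpha$ from $\Xi^s_\lambda(x_\lambda) = (s,x_\lambda,\lambda)$ to $\Xi^s_\lambda(y_\lambda) = (s,y_\lambda,\lambda)$. For each $\alpha\neq\lambda$ the source and target of $h_\alpha$ are both $s_\alpha$, so $h_\alpha$ is an endomorphism; since $X_\alpha$ is loop-free and the hom-set $X_\alpha(s_\alpha,s_\alpha)$ is nonempty (it contains $\id{s_\alpha}$), the morphism $h_\alpha$ admits a return and is therefore an identity (\cref{def:return}), i.e.\ $h_\alpha = \id{s_\alpha}$. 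Hence $h = (\id{s},h_\lambda,\lambda) = \Xi^s_\lambda(h_\lambda)$ with $h_\lambda\in X_\lambda(x_\lambda,y_\lambda)$, so $h$ lies in the image of the hom-set map of $\Xi^s_\lambda$. Together with the first paragraph this shows $\Xi^s_\lambda$ is fully faithful and, in particular, restricts to an isomorphism $X_\lambda\cong X^s_\lambda$ onto its image, as asserted in \cref{def:section}.

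The only real obstacle is the fullness argument: one must rule out nontrivial ``off-diagonal'' components in a morphism between two $\lambda$-sections taken at the same object $s$, and this is exactly where the loop-freeness of the factors $X_\alpha$ enters. Functoriality, the section identity, and faithfulness are routine unwindings of \cref{def:slice} and \cref{def:section}.
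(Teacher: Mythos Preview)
Your proof is correct and follows essentially the same approach as the paper: the paper's proof also takes an arbitrary morphism $f$ between $(s,x_\lambda,\lambda)$ and $(s,y_\lambda,\lambda)$, uses loop-freeness of each $X_\alpha$ to force $\pi_\alpha f = \id{s_\alpha}$ for $\alpha\neq\lambda$, and concludes $f = \Xi^s_\lambda(f_\lambda)$, with faithfulness declared clear. You are simply more explicit than the paper about functoriality, the section identity, and the derivation of faithfulness from the left inverse $\pi_\lambda$, none of which the paper spells out.
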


\begin{proof}
    Let $f \in \prod_{\alpha \in A} X_\alpha((s,x_\lambda, \lambda),(s,y_\lambda, \lambda))$.
    Then $X_\alpha$ loop-free implies, \(\pi_\alpha f =
    \id{s_\alpha}\) if \(\alpha \neq \lambda\)
    and $\pi_\lambda f \coloneqq f_\lambda \in X_\lambda(x_\lambda,y_\lambda)$.
    Such that $f = \Xi^s_\lambda(f_\lambda)$.
    This proves that $\Xi^s_\lambda$ is full. Furthermore, it is clearly faithful.
\end{proof}

\begin{corollary}
    \label{cor:full-sub-cat}
    Given a category \(X = \prod_{\alpha \in A} X_\alpha\),  \(\lambda \in A\), $s$ a morphism of
    \(X\) then
    $X^s_\lambda = \Xi^s_\lambda(X)$ is a connected, loop-free
    full sub-category of $X$ isomorphic to $X_\lambda$.
    Furthermore \[ \pi_\lambda \circ \Xi^s_\lambda = \id{{X_\lambda}}  \quad  \Xi^s_\lambda \circ \pi_\lambda|_{X^s_\lambda}= \id{{X^s_\lambda}}\]
\end{corollary}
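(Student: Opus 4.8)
The plan is to derive everything from \cref{lem:iso-slices} together with the elementary fact that connectedness and loop-freeness are invariant under an isomorphism of categories.

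First I would note that the functor $\Xi^s_\lambda\colon X_\lambda\to X$ of \cref{def:section} is injective on objects and on morphisms, since it admits $\pi_\lambda$ as a retraction ($\pi_\lambda\circ\Xi^s_\lambda=\id{X_\lambda}$, the first assertion of \cref{lem:iso-slices}). Consequently its image $X^s_\lambda$ — whose objects are the $(s,x_\lambda,\lambda)$ and whose morphisms are the $(\id{s},f_\lambda,\lambda)$ — is a genuine subcategory of $X$ (closure under composition is $(\id{s},g_\lambda,\lambda)\circ(\id{s},f_\lambda,\lambda)=(\id{s},g_\lambda\circ f_\lambda,\lambda)$), and $\Xi^s_\lambda$ is a bijection $X_\lambda\to X^s_\lambda$ on objects and on morphisms. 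Fullness and faithfulness of $\Xi^s_\lambda$ (the remaining assertions of \cref{lem:iso-slices}) then turn this bijection into an isomorphism of categories $X_\lambda\cong X^s_\lambda$, with inverse the restriction $\pi_\lambda|_{X^s_\lambda}$.

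Next, the fullness statement of \cref{lem:iso-slices} is exactly what makes $X^s_\lambda$ a \emph{full} subcategory of $X$: any morphism $f$ of $X$ between two objects $(s,x_\lambda,\lambda)$ and $(s,y_\lambda,\lambda)$ of $X^s_\lambda$ has $\pi_\alpha f=\id{s_\alpha}$ for $\alpha\neq\lambda$ by loop-freeness of $X_\alpha$, hence $f=(\id{s},f_\lambda,\lambda)\in X^s_\lambda$. Since $X_\lambda$ is connected and loop-free (standing hypotheses of this section), and both properties transport across the isomorphism $X_\lambda\cong X^s_\lambda$ — an isomorphism of categories sends fences to fences in both directions, and preserves emptiness of hom-sets, hence preserves being ``without return'' — the subcategory $X^s_\lambda$ is connected and loop-free as well.

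Finally, the two displayed equalities: $\pi_\lambda\circ\Xi^s_\lambda=\id{X_\lambda}$ is \cref{lem:iso-slices}, and $\Xi^s_\lambda\circ\pi_\lambda|_{X^s_\lambda}=\id{X^s_\lambda}$ is checked directly, since a morphism $f$ of $X^s_\lambda$ has the form $(\id{s},f_\lambda,\lambda)$ with $f_\lambda=\pi_\lambda f$, so $\Xi^s_\lambda(\pi_\lambda f)=(\id{s},f_\lambda,\lambda)=f$, and the object case is identical; equivalently these two equations simply record that $\pi_\lambda|_{X^s_\lambda}$ and $\Xi^s_\lambda$ are mutually inverse. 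I do not anticipate any real obstacle; the only point requiring a little care is that $X^s_\lambda$ must be read as the image subcategory, and that \cref{lem:iso-slices} is precisely the input needed to know this image is full in $X$.
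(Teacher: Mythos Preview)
Your proposal is correct and matches the paper's approach: the paper states \cref{cor:full-sub-cat} as an immediate corollary of \cref{lem:iso-slices} without giving any further argument, and your write-up is precisely the natural expansion of that implication (retraction gives injectivity, fullness and faithfulness give the isomorphism onto the image and make the image a full subcategory, connectedness and loop-freeness transfer along the isomorphism).
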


One last proposition that we will need from \cite{schroder2003ordered} 
is the fact that for any \(\lambda \in A\), $s_\lambda$ object of $X_\lambda$, 
the object part of the functor \(\Phi|_{X^s_\lambda} \colon   X_\lambda^s \to \prod_{\beta \in B} Y^\lambda_\beta\)
is an isomorphism of the underlying objects of the category.

\medskip

\begin{proposition}
    \label{prop:iso-restriction}
    \cite[Lemma 10.4.7]{schroder2003ordered}\\
    Let $\Phi \colon  \prod_{\alpha \in A } X_\alpha \to \prod_{\beta \in B} Y_\beta$ be an isomorphism of connected
    posets. Let $s \in \prod_{\alpha \in A} X_\alpha$ and $\lambda \in A$ and $X_\lambda^s$ be as in \cref{lem:iso-slices}.
    Let $Y^\lambda_\beta = \pi_\beta\Phi[X^s_\lambda]$, then 
    \begin{align*}
        \Phi|_{X^s_\lambda} \colon   X_\lambda^s \to \prod_{\beta \in B} Y^\lambda_\beta
    \end{align*}
    is an isomorphism of posets.
\end{proposition}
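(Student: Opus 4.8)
The plan is to reduce the proposition to surjectivity. First I would check that $\Phi|_{X^s_\lambda}$ really lands in $\prod_{\beta \in B} Y^\lambda_\beta$: for $x \in X^s_\lambda$ and each $\beta$ we have $\pi_\beta\Phi(x) \in \pi_\beta\Phi[X^s_\lambda] = Y^\lambda_\beta$, which is exactly what this requires. Next, since $\Phi$ is an isomorphism of posets and both $X^s_\lambda \subseteq \prod_{\alpha \in A} X_\alpha$ and $\prod_{\beta \in B} Y^\lambda_\beta \subseteq \prod_{\beta \in B} Y_\beta$ carry the induced orders, the restriction $\Phi|_{X^s_\lambda}$ is automatically injective, monotone and order-reflecting. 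Hence the whole statement reduces to showing $\Phi|_{X^s_\lambda}$ is onto, i.e.\ that $\Phi^{-1}(y) \in X^s_\lambda$ for every $y = (y_\beta)_{\beta \in B} \in \prod_{\beta \in B} Y^\lambda_\beta$, equivalently that $\pi_\alpha\Phi^{-1}(y) = s_\alpha$ for every $\alpha \in A \setminus \{\lambda\}$.

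To obtain surjectivity I would use \cref{prop::proj-cont} as a coordinate-replacement tool. For each $\beta \in B$, using $y_\beta \in Y^\lambda_\beta = \pi_\beta\Phi[X^s_\lambda]$, choose $x^{(\beta)} \in X^s_\lambda$ with $\pi_\beta\Phi(x^{(\beta)}) = y_\beta$ and put $z^{(\beta)} := \Phi(x^{(\beta)})$; then the $\beta$-coordinate of $z^{(\beta)}$ is $y_\beta$ and, because $x^{(\beta)} \in X^s_\lambda$, we have $\pi_\alpha\Phi^{-1}(z^{(\beta)}) = \pi_\alpha x^{(\beta)} = s_\alpha$ for all $\alpha \neq \lambda$. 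Now fix $\alpha \neq \lambda$ and a coordinate $\beta' \in B$. Reading $\Phi$ through the binary splittings $\prod_{\gamma \in A}X_\gamma \cong X_\alpha \times \prod_{\gamma \neq \alpha}X_\gamma$ and $\prod_{\gamma \in B}Y_\gamma \cong Y_{\beta'} \times \prod_{\gamma \neq \beta'}Y_\gamma$, and feeding into \cref{prop::proj-cont} the elements $z^{(\beta')}$ and any $w \in \prod_{\beta \in B}Y_\beta$ with $\pi_\alpha\Phi^{-1}(w) = s_\alpha$ (both of which have $\pi_\alpha\Phi^{-1}$ equal to $s_\alpha$), we obtain: the element obtained from $w$ by overwriting its $\beta'$-coordinate with $y_{\beta'}$ still satisfies $\pi_\alpha\Phi^{-1}(-) = s_\alpha$.

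Starting from $w := z^{(\beta_0)}$ for a fixed $\beta_0$ and overwriting coordinates one by one, every finite modification of $z^{(\beta_0)}$ towards $y$ keeps the $\alpha$-coordinate of its $\Phi^{-1}$-image at $s_\alpha$. When $B$ is finite, iterating over all of $B$ lands exactly on $y$, so $\pi_\alpha\Phi^{-1}(y) = s_\alpha$; since $\alpha \neq \lambda$ was arbitrary, $\Phi^{-1}(y) \in X^s_\lambda$, giving surjectivity and the proposition. The genuinely delicate point — and the step I expect to be the main obstacle for a self-contained argument — is an infinite index set $B$: one can no longer just iterate, and must show that the infinitely many coordinate replacements do not perturb the $\alpha$-coordinate of the $\Phi^{-1}$-image (e.g.\ by a suitable completeness/monotonicity argument on the posets). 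This, and hence the statement in full generality, is exactly \cite[Lemma 10.4.7]{schroder2003ordered}, which we take from the reference.
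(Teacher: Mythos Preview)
The paper does not prove this proposition at all: it simply records the statement and cites \cite[Lemma 10.4.7]{schroder2003ordered} for the proof. Your proposal is therefore entirely consistent with the paper's treatment, and in fact goes further by sketching the finite-$B$ argument and isolating where the real difficulty lies. Your reduction to surjectivity is correct (a restriction of a poset isomorphism to sub-posets with the induced order is automatically an order-embedding), and your coordinate-replacement step via \cref{prop::proj-cont} is exactly the intended mechanism. Your diagnosis that the infinite index set $B$ is the only non-trivial point, and your decision to defer that to the reference, matches what the paper does globally for this proposition.
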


This proposition also translates to an isomorphism of connected categories.

\begin{proposition}
    \label{lem:iso-restriction}
    Let $\Phi \colon  \prod_{\alpha \in A } X_\alpha \to \prod_{\beta \in B} Y_\beta$ be an isomorphism of connected
    loop-free categories. Let $s \in \prod_{\alpha \in A} X_\alpha$ and $\lambda \in A$ and $X_\lambda^s$ be as in \cref{lem:iso-slices}.
    Let $Y^\lambda_\beta = \pi_\beta\Phi[X^s_\lambda]$, then 
    \[\Phi|_{X^s_\lambda} \colon   X_\lambda^s \to \prod_{\beta \in B} Y^\lambda_\beta\]
    is an isomorphism of connected loop-free categories.
\end{proposition}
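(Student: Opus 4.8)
The plan is to lift Proposition~\ref{prop:iso-restriction} from posets to connected loop-free categories by exploiting the adjunction $\poset\hookrightarrow\lfcat$ together with the structural lemmas already available. First I would check that the statement is well-posed: by Corollary~\ref{cor:full-sub-cat}, $X^s_\lambda$ is a connected loop-free full subcategory of $\prod_{\alpha\in A}X_\alpha$ isomorphic to $X_\lambda$, hence connected and loop-free; and each $Y^\lambda_\beta=\pi_\beta\Phi[X^s_\lambda]$, being the image of a connected category under a functor, is connected, while it is loop-free as a subcategory of the loop-free $Y_\beta$. So the target $\prod_{\beta\in B}Y^\lambda_\beta$ is a legitimate object of $\lfcat$, and $\Phi|_{X^s_\lambda}$ corestricts to it by construction of the $Y^\lambda_\beta$ (each projection $\pi_\beta\Phi$ lands in $Y^\lambda_\beta$ by definition).

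Next I would argue that $\Phi|_{X^s_\lambda}\colon X^s_\lambda\to\prod_{\beta\in B}Y^\lambda_\beta$ is full and faithful. Faithfulness is immediate since $\Phi$ is an isomorphism (hence faithful) and restriction preserves faithfulness. For fullness, take a morphism $g=(g_\beta)_{\beta\in B}$ of $\prod_{\beta\in B}Y^\lambda_\beta$ between objects in the image; since $\Phi$ is an isomorphism there is a unique morphism $f=\Phi^{-1}(g)$ of $\prod_{\alpha\in A}X_\alpha$ mapping to it, and the point is to show $f$ actually lies in $X^s_\lambda$, i.e.\ $\pi_\alpha f=\id{s_\alpha}$ for $\alpha\neq\lambda$. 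Here is where Corollary~\ref{cor:proj-cont} (equivalently Lemma~\ref{lem:proj-cont}) enters: the endpoints of $f$ are objects of $X^s_\lambda$, so $\pi_\alpha(\dom f)=\pi_\alpha(\codom f)=s_\alpha$ for $\alpha\neq\lambda$, whence by functoriality $\pi_\alpha f$ is an endomorphism of $s_\alpha$ in the loop-free category $X_\alpha$, forcing $\pi_\alpha f=\id{s_\alpha}$. Thus $f\in X^s_\lambda$ and $\Phi|_{X^s_\lambda}(f)=g$, giving fullness. (In fact this last observation alone already shows $\Phi|_{X^s_\lambda}$ is full and faithful, without invoking Corollary~\ref{cor:proj-cont}; the projective-continuity lemma is needed rather for the surrounding development.)

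It remains to upgrade ``full, faithful, and bijective on objects'' to ``isomorphism of categories.'' Bijectivity on objects is precisely Proposition~\ref{prop:iso-restriction} applied to the underlying posets: by Proposition~\ref{prop:func-to-iso}, $\Phi$ restricts to an order-isomorphism on objects, and the poset version identifies $\Obj(X^s_\lambda)$ with $\prod_{\beta\in B}\Obj(Y^\lambda_\beta)$ via $\Phi|_{\mathsf{Obj}}$. A functor that is full, faithful, and bijective on objects is an isomorphism of categories: its object-map has an inverse, and for each hom-set the full-faithful condition gives a bijection $\Phi|_{X^s_\lambda}\colon X^s_\lambda(x,y)\xrightarrow{\ \sim\ }\bigl(\prod_\beta Y^\lambda_\beta\bigr)(\Phi x,\Phi y)$, and these bijections assemble into a functor inverse because composition is respected. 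Hence $\Phi|_{X^s_\lambda}$ is an isomorphism in $\cat$, and since both source and target are loop-free it is an isomorphism in $\lfcat$.

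The main obstacle I anticipate is not any single step but making precise the passage between the categorical statement and the poset statement of Proposition~\ref{prop:iso-restriction}: one must be careful that the poset $\Obj(\prod_{\beta\in B}Y^\lambda_\beta)$ with its canonical order agrees with $\prod_{\beta\in B}(\Obj(Y^\lambda_\beta),\le)$, and that $Y^\lambda_\beta$ as a subcategory carries the order induced from $Y_\beta$ so that Proposition~\ref{prop:iso-restriction} genuinely applies; this is routine but worth spelling out. Everything else is bookkeeping with Corollary~\ref{cor:full-sub-cat} and the loop-free cancellation Lemma~\ref{lem:tri-id}.
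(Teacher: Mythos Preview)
Your proof is correct and follows essentially the same route as the paper: use Proposition~\ref{prop:func-to-iso} together with the poset case (Proposition~\ref{prop:iso-restriction}) to get bijectivity on objects, and argue full-and-faithfulness from the fact that $X^s_\lambda$ is a full subcategory (Corollary~\ref{cor:full-sub-cat}) so that any morphism between its objects has all non-$\lambda$ components forced to be identities by loop-freeness. Your remark that Corollary~\ref{cor:proj-cont} is not actually needed here is accurate, and your statement ``full, faithful, and bijective on objects $\Rightarrow$ isomorphism'' is in fact slightly more precise than the paper's ``fully faithful and essentially surjective $\Rightarrow$ isomorphism,'' which tacitly uses that isomorphisms in a loop-free category are identities.
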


\begin{proof}
    Let us show that $\Phi|_{X^s_\lambda} \colon   X_\lambda^s \to \prod_{\beta \in B} Y^\lambda_\beta$ is essentially surjective.
    By \cref{prop:func-to-iso}, $\Phi_\mathsf{Obj}|_{X^s_\lambda}$ is an isomorphism of posets. Thus, by \cref{prop:iso-restriction},
    \begin{align*}
        \Phi_\mathsf{Obj}[X^s_\lambda] & = \prod_{\beta \in B} \pi_\beta\Phi_\mathsf{Obj}[\Obj{X^s_\lambda}] \\
                                       & = \prod_{\beta \in B} \Obj{\pi_\beta\Phi[X^s_\lambda]}              \\
        \Phi_\mathsf{Obj}[X^s_\lambda] & = \Obj{\prod_{\beta \in B} \pi_\beta\Phi[X^s_\lambda]}
    \end{align*}
    Thus $\Phi|_{X^s_\lambda}$ is essentially surjective.
    Furthermore, $\Phi|_{X^s_\lambda}$ is full and faithful as the restriction of a full and faithful functor to a full subcategory (\cref{lem:iso-slices} and \cref{cor:full-sub-cat}).\\
    Thus, $\Phi|_{X^s_\lambda}$ is a fully faithful and essentially surjective functor, thus an isomorphism in $\lfcat$
\end{proof}

\begin{proposition}
    \label{prop:iso-S}
    Let $\Phi \colon \prod_{\alpha \in A} X_\alpha \to \prod_{\beta \in B} Y_\beta$ an isomorphism of connected loop-free categories.
    Fix $\alpha \in A$ and $\beta \in B$. With
    \begin{align*}
        Y^\alpha_\beta \coloneqq \pi_\beta\Phi[X^s_\alpha] && X^\beta_\alpha \coloneqq \pi_\alpha \Phi^{-1}[Y^{\Phi(s)}_\beta] 
    \end{align*}
    The two following morphisms are inverse of each other
    \begin{align*}
        \pi_\alpha \circ \Phi^{-1} \circ  \Xi^{\Phi(s)}_\beta \colon Y^\alpha_\beta  \to X_\alpha^\beta
        & &
        \pi_\beta \circ \Phi \circ  \Xi^{s}_\alpha \colon X_\alpha^\beta \to Y^\alpha_\beta
    \end{align*}
\end{proposition}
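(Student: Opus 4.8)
The plan is to prove the statement first for ``ambient'' functors and then restrict. Write $\widetilde{F} := \pi_\beta\circ\Phi\circ\Xi^s_\alpha \colon X_\alpha \to Y_\beta$ and $\widetilde{G} := \pi_\alpha\circ\Phi^{-1}\circ\Xi^{\Phi(s)}_\beta \colon Y_\beta \to X_\alpha$; the two morphisms in the statement are exactly the restrictions $F := \widetilde{F}|_{X^\beta_\alpha}$ and $G := \widetilde{G}|_{Y^\alpha_\beta}$. First I would record that $\widetilde{F}[X_\alpha] = Y^\alpha_\beta$ and $\widetilde{G}[Y_\beta] = X^\beta_\alpha$, which is immediate from the definitions of $Y^\alpha_\beta$ and $X^\beta_\alpha$ together with $X^s_\alpha = \Xi^s_\alpha[X_\alpha]$ and $Y^{\Phi(s)}_\beta = \Xi^{\Phi(s)}_\beta[Y_\beta]$, and that, via \cref{cor:full-sub-cat} and \cref{lem:iso-restriction} (for $\Phi$ at basepoint $s$ and for $\Phi^{-1}$ at basepoint $\Phi(s)$), $\widetilde{F}$ is the composite of the isomorphism $X_\alpha\cong\prod_{\gamma\in B}Y^\alpha_\gamma$ with the projection onto $Y^\alpha_\beta$ (and symmetrically for $\widetilde{G}$); in particular $F$ and $G$ do take values in the asserted codomains. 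It then suffices to prove the two identities $\widetilde{F}\widetilde{G}\widetilde{F} = \widetilde{F}$ and $\widetilde{G}\widetilde{F}\widetilde{G} = \widetilde{G}$: given them, any object or morphism $\xi$ of $X^\beta_\alpha = \widetilde{G}[Y_\beta]$ is of the form $\widetilde{G}(y)$, hence $G(F(\xi)) = \widetilde{G}\widetilde{F}\widetilde{G}(y) = \widetilde{G}(y) = \xi$, so $G\circ F = \id{X^\beta_\alpha}$, and dually $F\circ G = \id{Y^\alpha_\beta}$.

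The crux is the functorial identity
\[
  \Phi^{-1}\circ\Xi^{\Phi(s)}_\beta\circ\widetilde{F} \;=\; \Xi^s_\alpha\circ\widetilde{G}\circ\widetilde{F}
  \colon X_\alpha\longrightarrow\prod_{\gamma\in A}X_\gamma .
\]
To obtain it I would fix a morphism $x$ of $X_\alpha$ and apply \cref{cor:proj-cont} to $f^1 := \id{s}$ and $f^2 := \Xi^s_\alpha(x)$: for every $\gamma\in A\setminus\{\alpha\}$ one has $\pi_\gamma f^1 = \id{s_\gamma} = \pi_\gamma f^2$, so \cref{cor:proj-cont} with $\lambda = \gamma$ and $\mu = \beta$ gives $\pi_\gamma\Phi^{-1}(\Phi(f^1),\Phi(f^2)_\beta,\beta) = \id{s_\gamma}$; and since $\Phi(f^1) = \id{\Phi(s)}$ and $\Phi(f^2)_\beta = \widetilde{F}(x)$, the argument of $\Phi^{-1}$ here is precisely $\Xi^{\Phi(s)}_\beta(\widetilde{F}(x))$. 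Thus all coordinates of $\Phi^{-1}\Xi^{\Phi(s)}_\beta\widetilde{F}(x)$ except the $\alpha$-th are identities, so this morphism lies in the slice $X^s_\alpha$ and equals $\Xi^s_\alpha$ of its $\alpha$-component, namely $\widetilde{G}\widetilde{F}(x)$ — which is the asserted identity. Post-composing it with $\pi_\beta\circ\Phi$ and using $\pi_\beta\circ\Xi^{\Phi(s)}_\beta = \id{Y_\beta}$ (\cref{lem:iso-slices}) yields $\widetilde{F} = \widetilde{F}\widetilde{G}\widetilde{F}$. The mirror identity $\Phi\circ\Xi^s_\alpha\circ\widetilde{G} = \Xi^{\Phi(s)}_\beta\circ\widetilde{F}\circ\widetilde{G}$ is proved by the same argument, applying \cref{cor:proj-cont} to the isomorphism $\Phi^{-1}$ (again an isomorphism of connected loop-free categories, with $\Phi^{-1}(\Phi(s)) = s$) and swapping the roles of $\alpha,\beta$ and of $s,\Phi(s)$; post-composing with $\pi_\alpha\circ\Phi^{-1}$ then gives $\widetilde{G} = \widetilde{G}\widetilde{F}\widetilde{G}$.

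The only non-formal ingredient is the invocation of \cref{cor:proj-cont}, which is precisely where connectedness and loop-freeness are used, so I expect that to be the main obstacle; the remainder is bookkeeping with slices and images. Two small points deserve care: the products $\prod_{\gamma\in B}Y^\alpha_\gamma$ and $\prod_{\gamma\in A}X^\beta_\gamma$ must be non-empty for their projections to be surjective on objects (true, since $\Cat{C}$ and hence every factor is non-empty), and one should verify that the mirror identity is a bona fide instance of \cref{cor:proj-cont} for $\Phi^{-1}$ with the basepoints correctly matched.
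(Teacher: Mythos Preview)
Your proposal is correct and follows essentially the same approach as the paper: both proofs use \cref{cor:proj-cont} applied to $f^1=\id_s$ and $f^2=\Xi^s_\alpha(x)$ to show that $\Phi^{-1}\Xi^{\Phi(s)}_\beta\widetilde{F}$ lands in the slice $X^s_\alpha$, and then exploit the section identities $\pi_\beta\Xi^{\Phi(s)}_\beta=\id$ and $\Xi^s_\alpha\pi_\alpha|_{X^s_\alpha}=\id$ to collapse the composite. The only cosmetic difference is packaging: the paper first argues that $\Phi^{-1}\Xi^{\Phi(s)}_\beta$ sends $Y^\alpha_\beta$ into $X^s_\alpha$ (using surjectivity of $\widetilde{F}$ onto $Y^\alpha_\beta$ from \cref{lem:iso-restriction}) and then contracts directly to $FG=\id$, whereas you derive the absorption laws $\widetilde{F}\widetilde{G}\widetilde{F}=\widetilde{F}$ and $\widetilde{G}\widetilde{F}\widetilde{G}=\widetilde{G}$ on the ambient categories and deduce the inverse identities from the description of $X^\beta_\alpha$ and $Y^\alpha_\beta$ as images; these are two ways of organising the same computation.
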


\begin{proof}
    Let us prove that 
    \[\pi_\beta \Phi \circ  \Xi^{s}_\alpha  \pi_\alpha
        \circ \Phi^{-1} \Xi^{\Phi(s)}_\beta|_{Y^\alpha_\beta} \colon Y^\alpha_\beta \to Y^\alpha_\beta \text{
    is the identity.}\]
    First let us remark that
    $\pi_\beta \Phi
        \circ \Phi^{-1} \Xi^{\Phi(s)}_\beta = \id{Y^\alpha_\beta}$ and
    $\Xi^{s}_\alpha  \pi_\alpha|_{X^s_\alpha} = \id{X^s_\alpha}$ (\cref{cor:full-sub-cat}). But
    this contraction can only be made if we
    prove that  $\Phi^{-1} \Xi^{\Phi(s)}_\beta$ sends $Y^\alpha_\beta$ to a subcategory
    of $X^s_\alpha$.
    As
    $\pi_\beta\Phi\Xi^s_\alpha \colon X_\alpha \to Y^\alpha_\beta$ is full and essentially surjective
    by \cref{lem:iso-restriction}, it is equivalent
    to proving that $\Phi^{-1} \Xi^{\Phi(s)}_\beta\pi_\beta\Phi\Xi^s_\alpha$ sends $X_\alpha$ to a subcategory of
    $X^s_\alpha$\\

    Let $f_\alpha \in X_\alpha$ and $\lambda \in A, \lambda \neq \alpha$. By definition,
    $\pi_\lambda \id{s} = \pi_\lambda \Xi_\alpha^s f$. Thus, by \cref{cor:proj-cont},
    \begin{align*}
            &  & \pi_\lambda \Phi^{-1} (\Phi(\id{s}), \Phi(\Xi_\alpha^s f)_\beta, \beta) & = \pi_\lambda \id{s} &  & \\
        \ie &  & \pi_\lambda \Phi^{-1} \Xi^{\Phi(s)}_\beta \pi_\beta \Phi \Xi_\alpha^s f & = \pi_\lambda \id{s} &  &
    \end{align*}

    Thus, $\Phi^{-1} \Xi^{\Phi(s)}_\beta \pi_\beta \Phi \Xi_\alpha^s$ sends $ X_\alpha$ to a subcategory
    of $X^s_\alpha$. Such that,
    \begin{align*}
        \pi_\beta \Phi \circ  \Xi^{s}_\alpha  \pi_\alpha
        \circ \Phi^{-1} \Xi^{\Phi(s)}_\beta|_{Y^\alpha_\beta}
                                                              & =\pi_\beta \Phi \circ \underbrace{ \Xi^{s}_\alpha  \pi_\alpha}_{= \id{X^s_\alpha}}
        \circ \underbrace{\Phi^{-1} \Xi^{\Phi(s)}_\beta|_{Y^\alpha_\beta}}_{\text{maps to } X^s_\alpha}                                            &\\
                                                              & = \pi_\beta \circ \Phi \circ \Phi^{-1} \circ \Xi^{\Phi(s)}_\beta|_{Y^\alpha_\beta} &\\
                                                              & = \pi_\beta \Xi^{\Phi(s)}_\beta|_{Y^\alpha_\beta}                                  &\\
        \pi_\beta \Phi \circ  \Xi^{s}_\alpha  \pi_\alpha
        \circ \Phi^{-1} \Xi^{\Phi(s)}_\beta|_{Y^\alpha_\beta} & = \id{Y^\alpha_\beta}  & \text{\cref{cor:full-sub-cat}}
    \end{align*}
    Similarly, $\pi_\alpha
        \Phi^{-1} \Xi^{\Phi(s)}_\beta  \circ \pi_\beta \Phi \Xi^{s}_\alpha|_{X^\beta_\alpha}
        = \id{X^\beta_\alpha} $
\end{proof}

With this we have all we need to build the isomorphism
$ \Psi_1 = (b_\beta)_{\beta \in B} \colon \prod_{\beta \in B} Y_\beta \to \prod_{\alpha\in A, \beta \in B} X^\beta_\alpha $
and $ \Psi_2 = (a_\alpha)_{\alpha \in A} \colon \prod_{\alpha \in A} X_\alpha \to \prod_{\alpha\in A, \beta \in B} X^\beta_\alpha $
from \cref{def:refine}, \ie all the morphisms in Diagram \ref{diag:hashimoto-detailled} are isomorphisms.

\begin{proposition}
    \label{prop:iso-psi}
    With the previous notations and with
    \begin{align*}
        \Psi_1 = \biggl(\prod_{\beta \in B} \Phi^{-1} \circ \Xi^{\Phi(s)}_\beta \biggr)
        & & \Psi_2 =
        \prod_{\alpha \in A} \biggl(\prod_{\beta \in B}\biggl( \pi_\alpha\circ  \Phi^{-1} \circ \Xi^{\Phi(s)}_\beta \biggr)
        \circ \Phi \circ \Xi^s_\alpha \biggr)
    \end{align*}
    \(\Psi \circ \Phi\) and \(\Psi_2\) are isomorphisms of connected loop-free categories.
    Furthermore,
    for all $\alpha \in A$, $\beta \in B$, $\pi_\beta\Psi_1$ and $\pi_\alpha\Psi_2$ are isomorphisms.
\end{proposition}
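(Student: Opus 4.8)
The plan is to prove \cref{prop:iso-psi} factor by factor and then assemble, using the structural results already established. Recall that $\lfcat$ has all products, hence a product map of isomorphisms is an isomorphism, with inverse the product map of the inverses (immediate from the universal property). Write $b_\beta \coloneqq \Phi^{-1}\Xi_\beta^{\Phi(s)}$ and $a_\alpha \coloneqq \bigl(\pi_\alpha\Phi^{-1}\Xi_\beta^{\Phi(s)}\,\pi_\beta\Phi\,\Xi_\alpha^s\bigr)_{\beta\in B}$, so that $\pi_\beta\Psi_1 = b_\beta$, $\pi_\alpha\Psi_2 = a_\alpha$, $\Psi_1 = (b_\beta)_{\beta\in B}$ and $\Psi_2 = (a_\alpha)_{\alpha\in A}$. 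Since an isomorphism of categories transports connectedness and the loop-free property, it suffices to exhibit each $b_\beta$ and each $a_\alpha$ as an isomorphism onto the appropriate factor.

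First I would handle $\Psi_1$. Fix $\beta\in B$. By \cref{cor:full-sub-cat} the section $\Xi_\beta^{\Phi(s)}\colon Y_\beta\to Y_\beta^{\Phi(s)}$ is an isomorphism. Applying \cref{lem:iso-restriction} to the isomorphism $\Phi^{-1}\colon\prod_{\beta\in B}Y_\beta\to\prod_{\alpha\in A}X_\alpha$, the object $\Phi(s)$, and the index $\beta$, the restriction $\Phi^{-1}|_{Y_\beta^{\Phi(s)}}\colon Y_\beta^{\Phi(s)}\to\prod_{\alpha\in A}X_\alpha^\beta$ is an isomorphism (with $X_\alpha^\beta = \pi_\alpha\Phi^{-1}[Y_\beta^{\Phi(s)}]$ as in \cref{prop:iso-S}). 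Hence $b_\beta = \Phi^{-1}|_{Y_\beta^{\Phi(s)}}\circ\Xi_\beta^{\Phi(s)}$ is an isomorphism $Y_\beta\to\prod_{\alpha\in A}X_\alpha^\beta$, so $\Psi_1 = (b_\beta)_{\beta\in B}$ — and therefore $\Psi_1\circ\Phi$ — is an isomorphism.

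Then, for $\Psi_2$, I would fix $\alpha\in A$ and factor $a_\alpha$ as a composite of two isomorphisms. By \cref{lem:iso-slices} and \cref{cor:full-sub-cat}, $\Xi_\alpha^s\colon X_\alpha\to X_\alpha^s$ is an isomorphism, and by \cref{lem:iso-restriction} applied to $\Phi$, $s$, $\alpha$, the restriction $\Phi|_{X_\alpha^s}\colon X_\alpha^s\to\prod_{\beta\in B}Y_\beta^\alpha$ is an isomorphism; composing, $\Phi|_{X_\alpha^s}\circ\Xi_\alpha^s = (\pi_\beta\Phi\,\Xi_\alpha^s)_{\beta\in B}\colon X_\alpha\to\prod_{\beta\in B}Y_\beta^\alpha$ is an isomorphism. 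By \cref{prop:iso-S} each restriction $\pi_\alpha\Phi^{-1}\Xi_\beta^{\Phi(s)}|_{Y_\beta^\alpha}\colon Y_\beta^\alpha\to X_\alpha^\beta$ is an isomorphism, hence so is the product map $\bigl(\pi_\alpha\Phi^{-1}\Xi_\beta^{\Phi(s)}|_{Y_\beta^\alpha}\bigr)_{\beta\in B}\colon\prod_{\beta\in B}Y_\beta^\alpha\to\prod_{\beta\in B}X_\alpha^\beta$. The composite of these two isomorphisms has $\beta$-th component $\pi_\alpha\Phi^{-1}\Xi_\beta^{\Phi(s)}\,\pi_\beta\Phi\,\Xi_\alpha^s$, so it equals $a_\alpha$; thus each $a_\alpha$ is an isomorphism and $\Psi_2 = (a_\alpha)_{\alpha\in A}$ is an isomorphism.

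The step I expect to require the most care is the bookkeeping of sources and targets in the factorization of $a_\alpha$: one must know that $\Phi[X_\alpha^s]$ is precisely $\prod_{\beta\in B}Y_\beta^\alpha$, so that the codomain restriction of $\Phi|_{X_\alpha^s}\circ\Xi_\alpha^s$ is legitimate, and that $\pi_\alpha\Phi^{-1}\Xi_\beta^{\Phi(s)}$ really sends $Y_\beta^\alpha$ into $X_\alpha^\beta$ — and these are exactly what \cref{lem:iso-restriction} and \cref{prop:iso-S} provide. Once these inclusions are in place, everything else is a routine composition of isomorphisms together with the fact that products of isomorphisms are isomorphisms in $\lfcat$.
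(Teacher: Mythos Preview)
Your proof is correct and follows essentially the same approach as the paper's: both treat $\Psi_1$ and $\Psi_2$ factor by factor, using \cref{cor:full-sub-cat} for the sections, \cref{lem:iso-restriction} for the restrictions of $\Phi^{\pm 1}$, and \cref{prop:iso-S} for the swap $Y_\beta^\alpha\to X_\alpha^\beta$, then conclude by composition and product of isomorphisms. Your explicit remark on the bookkeeping of sources and targets in the factorization of $a_\alpha$ is a helpful clarification that the paper leaves implicit.
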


\begin{proof}
    \begin{itemize}
        \item $\Psi_1$ is an isomorphism. Indeed, by \cref{cor:full-sub-cat} $\Xi^{\Phi(s)}_\beta \colon Y_\beta \to Y_\beta^{\Phi(s)}$ is an isomorphism.
              By \ref{lem:iso-restriction}, so is $\Phi^{-1} \colon  Y_\beta^{\Phi(s)} \to \prod_{\alpha \in A} X_\alpha^\beta$.
              Thus, as products of isomorphism, all arrows in the following diagram are isomorphisms.
              \[
                  \prod_{\beta \in B} Y_\beta
                  \xrightarrow{\prod_{\beta\in B}\Xi^{\Phi(s)}_\beta} \prod_{\beta \in B} Y_\beta^{\Phi(s)}
                  \xrightarrow{\prod_{\beta\in B}\Phi^{-1}} \prod_{\beta\in B}\prod_{\alpha \in A} X_\alpha^\beta\]

        \item Let us prove that $\Psi_2$ is an isomorphism. Indeed, by
              \cref{cor:full-sub-cat}, \resp \cref{lem:iso-restriction} \resp \ref{prop:iso-S},
              the following functors are all isomorphisms:
              \[ X_\alpha  \xrightarrow{\Xi^s_\alpha} X^s_\alpha
                  \xrightarrow{\Phi} \prod_{\beta \in B} Y_\beta ^\alpha
                  \xrightarrow{\prod_{\beta \in B} \pi_\alpha\circ \Phi^{-1} \circ \Xi^{\Phi(s)}_\beta } \prod_{\beta \in B} X^\beta_\alpha\]
              It follows that $\Psi_2 = \prod_{\lambda \in A} \biggl(\prod_{\beta \in B}\biggl( \pi_\alpha\circ  \Phi^{-1} \circ \Xi^{\Phi(s)}_\beta \biggr)
                  \circ \Phi \circ \Xi^s_\alpha \biggr)$ is an isomorphism by composition and product of isomorphisms.
              Furthermore, each $\pi_\alpha\Psi_2$ is an isomorphism
    \end{itemize}
\end{proof}


Now let us prove the commutativity of the Diagram \ref{diag:hashi-small}.

\medskip

\begin{proposition}
    \label{prop:restriction-psi-eq}
    Let $\mu \in B$, $f \in \prod_{\alpha \in A} X_\alpha^\mu = \prod_{\alpha \in A} \pi_\alpha \Phi^{-1} [Y_\mu^{\Phi(s)}]$, with
    $\Psi_1$ and
    $\Psi_2$ as defined in \cref{prop:iso-psi}.
    Then for all $\beta\in B$ and $\alpha \in A$ \[\pi_\beta\pi_\alpha (\gamma^{-1} \circ \Psi_1\circ\Phi)(f) = \pi_\beta\pi_\alpha\Psi_2(f) = \begin{cases}
            f_\alpha \quad\text{if } \beta = \mu \\
            s_\alpha \quad\text{otherwise}
        \end{cases}\]
\end{proposition}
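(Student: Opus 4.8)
The plan is to reduce both composites $\gamma^{-1}\circ\Psi_1\circ\Phi$ and $\Psi_2$, evaluated on a morphism of $\prod_{\lambda\in A}X_\lambda^\mu$, to one and the same explicit map, and then to compute its $(\alpha,\beta)$-coordinate with the projection-continuity \cref{cor:proj-cont}. First I would unwind the descriptions of $\Psi_1$ and $\Psi_2$ from \cref{prop:iso-psi}: since $\gamma$ merely permutes the two product layers, $\pi_\beta\pi_\alpha\circ\gamma^{-1}=\pi_\alpha\pi_\beta$, and combining this with $\pi_\beta\Psi_1=\Phi^{-1}\Xi^{\Phi(s)}_\beta\pi_\beta$ and $\pi_\beta\pi_\alpha\Psi_2=\pi_\alpha\Phi^{-1}\Xi^{\Phi(s)}_\beta\pi_\beta\Phi\Xi^s_\alpha\pi_\alpha$ shows that, applied to $f$, both sides equal $\pi_\alpha\Phi^{-1}\Xi^{\Phi(s)}_\beta\pi_\beta\Phi(g)$, where $g=f$ for the first and $g=\Xi^s_\alpha(f_\alpha)$ for the second, and where $\pi_\alpha g=f_\alpha$ in both cases. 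The preliminary point is that $g$ is a morphism of $\prod_{\lambda\in A}X_\lambda^\mu$ in both cases: for $g=f$ this is the hypothesis, while for $g=\Xi^s_\alpha(f_\alpha)$ it holds because $f_\alpha\in X_\alpha^\mu$ and, for $\lambda\neq\alpha$, $\id{s_\lambda}=\pi_\lambda\Phi^{-1}(\id{\Phi(s)})\in X_\lambda^\mu$, the latter since $\id{\Phi(s)}=\Xi^{\Phi(s)}_\mu(\id{\Phi(s)_\mu})$ lies in $Y_\mu^{\Phi(s)}$.

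Then I would isolate the lemma on which everything hinges: \emph{if $g$ is a morphism of $\prod_{\lambda\in A}X_\lambda^\mu$, then $\Phi(g)\in Y_\mu^{\Phi(s)}$ and $\pi_\lambda\Phi^{-1}\Xi^{\Phi(s)}_\mu\pi_\mu\Phi(g)=\pi_\lambda g$ for every $\lambda\in A$.} Fixing $\lambda$, since $\pi_\lambda g\in X_\lambda^\mu=\pi_\lambda\Phi^{-1}[Y_\mu^{\Phi(s)}]$, I may pick a morphism $h$ of $\prod_{\alpha\in A}X_\alpha$ with $\Phi(h)\in Y_\mu^{\Phi(s)}$ and $\pi_\lambda h=\pi_\lambda g$; then \cref{cor:proj-cont}, applied to this $\lambda$ and to the index $\mu$, gives $\pi_\lambda\Phi^{-1}(\Phi(h),\pi_\mu\Phi(g),\mu)=\pi_\lambda h=\pi_\lambda g$. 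As every coordinate of $\Phi(h)\in Y_\mu^{\Phi(s)}$ other than the $\mu$-th is the corresponding identity of $\Phi(s)$, the morphism $(\Phi(h),\pi_\mu\Phi(g),\mu)$ coincides with $\Xi^{\Phi(s)}_\mu(\pi_\mu\Phi(g))$, whence $\pi_\lambda\Phi^{-1}\Xi^{\Phi(s)}_\mu\pi_\mu\Phi(g)=\pi_\lambda g$; since $\lambda$ is arbitrary this says $\Phi^{-1}\Xi^{\Phi(s)}_\mu\pi_\mu\Phi(g)=g$, so $\Xi^{\Phi(s)}_\mu\pi_\mu\Phi(g)=\Phi(g)$, meaning precisely that $\Phi(g)$ lies in the image $Y_\mu^{\Phi(s)}$ of $\Xi^{\Phi(s)}_\mu$.

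Finally I would distinguish the two cases. If $\beta=\mu$, the lemma at $\lambda=\alpha$ gives $\pi_\alpha\Phi^{-1}\Xi^{\Phi(s)}_\mu\pi_\mu\Phi(g)=\pi_\alpha g=f_\alpha$. If $\beta\neq\mu$, then $\Phi(g)\in Y_\mu^{\Phi(s)}$ forces $\pi_\beta\Phi(g)=\id{\Phi(s)_\beta}$, hence $\Xi^{\Phi(s)}_\beta\pi_\beta\Phi(g)=\id{\Phi(s)}$, $\Phi^{-1}(\id{\Phi(s)})=\id{s}$, and its $\alpha$-th component is $\id{s_\alpha}$, which is what ``$s_\alpha$'' abbreviates here; taking $g=f$ yields the value of $\gamma^{-1}\Psi_1\Phi$ and $g=\Xi^s_\alpha(f_\alpha)$ that of $\Psi_2$, so these agree and equal the announced case expression. (When $f$ is an object rather than a morphism the same argument applies verbatim, with identities replaced by the underlying objects.) The delicate point, which I expect to be the main obstacle, is the lemma of the second paragraph: one has to notice that $\id{s}$ itself belongs to $\Phi^{-1}[Y_\mu^{\Phi(s)}]$, so that a constant ``identity background'' is available, and then to feed \cref{cor:proj-cont} the pair $(h,g)$ in that order, so that the single coordinate it changes sends $\Phi(h)$ exactly onto $\Xi^{\Phi(s)}_\mu\pi_\mu\Phi(g)$; the remainder is routine manipulation of the definitions of $\Psi_1$, $\Psi_2$ and of the sections $\Xi$.
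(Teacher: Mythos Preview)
Your proposal is correct and hinges on the same projection-continuity tool (\cref{cor:proj-cont}) as the paper, but the organization differs in two respects worth noting. First, you unify the two computations: you observe that both $\pi_\beta\pi_\alpha(\gamma^{-1}\Psi_1\Phi)(f)$ and $\pi_\beta\pi_\alpha\Psi_2(f)$ reduce to $\pi_\alpha\Phi^{-1}\Xi^{\Phi(s)}_\beta\pi_\beta\Phi(g)$ for a morphism $g$ of $\prod_{\lambda}X_\lambda^\mu$ with $\pi_\alpha g=f_\alpha$, and then treat both at once. The paper instead handles $\Psi_1\circ\Phi$ and $\Psi_2$ in separate bullet points, with a further case split $\beta=\mu$ versus $\beta\neq\mu$ inside the $\Psi_2$ computation. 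Second, the fact $\Phi(g)\in Y_\mu^{\Phi(s)}$ is obtained differently: the paper simply invokes \cref{lem:iso-restriction} (applied to $\Phi^{-1}$), whereas you re-derive it from scratch by picking, for each $\lambda$, an $h$ with $\Phi(h)\in Y_\mu^{\Phi(s)}$ and $\pi_\lambda h=\pi_\lambda g$ and applying \cref{cor:proj-cont}. Your route is slightly more self-contained and avoids the dependence on \cref{lem:iso-restriction}; the paper's route is shorter because that lemma is already available. Either way, the $\beta\neq\mu$ case then collapses immediately (all non-$\mu$ coordinates of $\Phi(g)$ are identities on $\Phi(s)$), and the $\beta=\mu$ case follows from your lemma at $\lambda=\alpha$, matching the paper's conclusion.
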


\begin{proof}
    By definition of $\gamma$, the equality of \cref{prop:restriction-psi-eq} above is equivalent to
    \[(\Psi_1\circ\Phi(f)_\beta)_\alpha = ((\gamma\circ\Psi_2(f))_\beta)_\alpha = \begin{cases}
            f_\alpha \quad\text{if } \beta = \mu \\
            s_\alpha \quad\text{otherwise}
        \end{cases}\]

    Let $f= (f_\alpha)_{\alpha \in A} \in \prod_{\lambda \in A} X_\lambda^\mu$. By \cref{lem:iso-restriction},
    $\Phi(f) \in Y_\mu^{\Phi(s)}$ with $\Phi(f) = (\Phi(\id{s}),\Phi(f)_\mu,\mu)$ and $\Phi(f)_\mu \in Y_\mu$.
    \\
    \begin{itemize}
        \item For $\Psi_1\circ \Phi = \prod_{\beta \in B}\Phi^{-1} \circ \Xi^{\Phi(s)}_\beta \circ \pi_\beta\Phi$
            \begin{align*}
                \pi_\beta \Psi_1 \circ \Phi(f)
                    & = \Phi^{-1} \circ \Xi^{\Phi(s)}_\beta \circ \Phi (f)_\beta                                           &  &                              \\
                    & = \Phi^{-1}(\id{\Phi(s)},\Phi(f)_\beta,\beta)                                                        &  & \eqcref{def:section} \\
                    & = \Phi^{-1}(\id{\Phi(s)},(\id{\Phi(s)},\Phi(f)_\mu,\mu)_\beta,\beta)                                 &  &                              \\
                \pi_\beta \Psi_1 \circ \Phi (f) & =
                    \begin{cases}
                    \Phi^{-1}(\id{\Phi(s)}) = \id{s} &\text{if } \mu \neq \beta \\
                    \Phi^{-1}(\id{\Phi(s)},\Phi(f)_\mu ,\mu) = f &\text{if } \mu = \beta
                    \end{cases} &
              \end{align*}
        \item For $\gamma \circ \Psi_2= \prod_{\alpha \in A} \biggl(\prod_{\beta \in B} \pi_\alpha \Phi^{-1} \circ \Xi^{\Phi(s)}_\beta
                  \biggr) (\Phi \circ  \Xi_\alpha^s)$
              \begin{itemize}
                  \item $\beta \neq \mu$.\\
                        By definition,  $\id{s} = \Phi^{-1}\Phi(\id{s}) \in \Phi^{-1}[Y_\mu^{\Phi(s)}] = \prod_{\alpha\in A} X^\mu_\alpha$,
                        such that for all $\alpha \in A$, $\Xi^s_\alpha f_\alpha \in \prod_{\alpha\in A} X^\mu_\alpha$ \ie
                        $\Phi \circ \Xi_\alpha^s (f_\alpha) \in Y_\mu^{\Phi(s)}$
                        and thus for all $\beta \neq \mu$, $\pi_\beta\Phi \circ \Xi_\alpha^s (f_\alpha)
                            = \Phi(\id{s})_\beta$.\\
                        Thus, for all $\beta \neq \mu$
                        \begin{align*}
                            \pi_\alpha\pi_\beta \circ \gamma \circ \Psi_2 (f)
                                                                        & = \pi_\alpha \Phi^{-1} \Xi^{\Phi(s)}_\beta \biggl( \pi_\beta \Phi \circ  \Xi_\alpha^s (f_\alpha) \biggr) \\
                                                                        & = \pi_\alpha \Phi^{-1}  \Xi^{\Phi(s)}_\beta (\Phi(\id{s})_\beta)                                         \\
                            \pi_\alpha\pi_\beta \circ \gamma \circ \Psi_2 (f) & = \pi_\alpha{\id{s}}
                        \end{align*}
                  \item $\mu = \beta$.
                        Then, $f \in \prod_{\lambda \in A} X_\lambda^\mu$ therefore $\Phi(f) = (\Phi(\id{s}),\Phi(f)_\beta,\beta)$. It follows that
                        \begin{equation}
                            \label{eq:restr-1}
                            \forall g_\beta \in Y_\beta, (\Phi(f),g_\beta,\beta) =  (\Phi(\id{s}),g_\beta,\beta)
                        \end{equation}
                        Furthermore $\pi_\alpha \Phi^{-1}\Phi(f) = \pi_\alpha\Phi^{-1} \Phi(\id{s},f_\alpha, \alpha ) = f_\alpha$. Hence,
                        \begin{align*}
                            f_\alpha & =  \pi_\alpha \Phi^{-1}(\Phi(f), (\Phi(\id{s},f_\alpha, \alpha ))_\beta,\beta)     & \text{ By \cref{cor:proj-cont}}  \\
                                     & = \pi_\alpha \Phi^{-1}(\Phi(\id{s}), (\Phi(\id{s},f_\alpha, \alpha ))_\beta,\beta) & \text{ By \cref{eq:restr-1}}     \\
                            f_\alpha & = \pi_\alpha \pi_\beta \gamma \circ \Psi_2 (f)                                     & \text{ By definition of } \Psi_2
                        \end{align*}
              \end{itemize}
    \end{itemize}
    .
\end{proof}

Now that commutativity of Diagram~\ref{diag:hashi-small} is proven, 
we extend the commutativity along one of the \(\lambda \in A\),
thus proving the commutativity of the Diagram~\ref{diag:hashimoto-detailled}.
First as explained in the outline, we will to prove the faithfulness of the restriction,
which is less trivial in our case.

\begin{lemma}
    \label{lem:faith-restriction}
    With the previous notation, $\gamma \colon \prod_{\beta \in B} \prod_{\alpha \in A} 
    X_\alpha^\beta \to \prod_{\alpha \in A}  \prod_{\beta \in B} X_\alpha^\beta$ the natural isomorphism
    and with $\widetilde{s_\lambda}$ the singleton category for a given $\lambda \in A$ , the following restrictions are faithful functors.
    \begin{align*}
        \pi_{\lambda}\gamma^{-1}\Psi_1\circ \Phi|_{X^s_{\lambda}} &
                                                                  & \pi_{\set{\alpha \in A | \alpha\neq\lambda}}\gamma^{-1}\Psi_1\circ \Phi|_{\prod_{\alpha \neq \lambda }X_\alpha \times \widetilde{s_\lambda}} \\
        \pi_{\lambda}\Psi_2|_{X^s_{\lambda}}                      &
                                                                  & \pi_{\set{\alpha \in A | \alpha\neq\lambda}}\Psi_2|_{\prod_{\alpha \neq \lambda }X_\alpha \times \widetilde{s_\lambda}}
    \end{align*}
\end{lemma}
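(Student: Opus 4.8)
The plan is to reduce all four faithfulness claims to a single elementary observation about products, and then to do the bookkeeping with \cref{cor:proj-cont}. The observation is: \emph{if a functor $F=(F_1,F_2)\colon \Cat{C}\to\Cat{D}_1\times\Cat{D}_2$ is faithful and $F_2$ is a constant functor, then $F_1$ is faithful} — indeed, for a parallel pair $f,g$ of morphisms of $\Cat{C}$ one automatically has $F_2f=F_2g$, so $F_1f=F_1g$ forces $Ff=Fg$, hence $f=g$ (and symmetrically with the roles of $F_1,F_2$ exchanged). I would apply this after splitting the common target $\prod_{\alpha\in A}\prod_{\beta\in B}X_\alpha^\beta$ as $\bigl(\prod_{\beta\in B}X_\lambda^\beta\bigr)\times\bigl(\prod_{\alpha\neq\lambda}\prod_{\beta\in B}X_\alpha^\beta\bigr)$. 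Since $\Psi_2$ and $\gamma^{-1}\Psi_1\circ\Phi$ are isomorphisms of loop-free categories (\cref{prop:iso-psi}, using that $\gamma$ is a natural isomorphism), their restrictions along the subcategory inclusions $X^s_\lambda\hookrightarrow\prod_{\alpha\in A}X_\alpha$ and $\prod_{\alpha\neq\lambda}X_\alpha\times\widetilde{s_\lambda}\hookrightarrow\prod_{\alpha\in A}X_\alpha$ are faithful; so it suffices to check, in each of the four situations, that the \emph{complementary} component of the restriction is constant. (If $\Cat{C}$ is empty everything is vacuous, so assume it is not; below, $\pi_{A\setminus\set{\lambda}}$ denotes the projection $\pi_{\set{\alpha\in A\mid\alpha\neq\lambda}}$ of the statement.)

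For the two restrictions with domain $X^s_\lambda$, I would use that a morphism of $X^s_\lambda$ has the form $\Xi^s_\lambda(f_\lambda)=(\id{s},f_\lambda,\lambda)$, so that its $\alpha$-coordinate in $\prod_{\alpha\in A}X_\alpha$ is $\id{s_\alpha}$ whenever $\alpha\neq\lambda$. For $\Psi_2=\prod_{\alpha\in A}a_\alpha$ this gives $\pi_\alpha\Psi_2\bigl(\Xi^s_\lambda(f_\lambda)\bigr)=a_\alpha(\id{s_\alpha})=\id{a_\alpha(s_\alpha)}$ for $\alpha\neq\lambda$ by functoriality, so $\pi_{A\setminus\set{\lambda}}\Psi_2|_{X^s_\lambda}$ is constant and $\pi_\lambda\Psi_2|_{X^s_\lambda}$ is faithful. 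For $\gamma^{-1}\Psi_1\circ\Phi$, unwinding $\gamma$ and the definition of $\Psi_1$ shows that the component of $(\gamma^{-1}\Psi_1\circ\Phi)(f)$ in the factor $X_\alpha^\beta$ is $\pi_\alpha\Phi^{-1}\bigl(\Phi(\id{s}),\Phi(f)_\beta,\beta\bigr)$; for $f\in X^s_\lambda$ and $\alpha\neq\lambda$ one has $\pi_\alpha(\id{s})=\id{s_\alpha}=\pi_\alpha f$, so \cref{cor:proj-cont} gives $\pi_\alpha\Phi^{-1}\bigl(\Phi(\id{s}),\Phi(f)_\beta,\beta\bigr)=\id{s_\alpha}$ for every $\beta$, making $\pi_{A\setminus\set{\lambda}}(\gamma^{-1}\Psi_1\circ\Phi)|_{X^s_\lambda}$ constant and $\pi_\lambda(\gamma^{-1}\Psi_1\circ\Phi)|_{X^s_\lambda}$ faithful.

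For the two restrictions with domain $\prod_{\alpha\neq\lambda}X_\alpha\times\widetilde{s_\lambda}$, a morphism $g$ now has $\lambda$-coordinate $\id{s_\lambda}$, i.e.\ $\pi_\lambda g=\id{s_\lambda}=\pi_\lambda(\id{s})$. For $\Psi_2$, functoriality gives $\pi_\lambda\Psi_2(g)=a_\lambda(\id{s_\lambda})=\id{a_\lambda(s_\lambda)}$, so $\pi_\lambda\Psi_2|_{\prod_{\alpha\neq\lambda}X_\alpha\times\widetilde{s_\lambda}}$ is constant and its complement $\pi_{A\setminus\set{\lambda}}\Psi_2|_{\prod_{\alpha\neq\lambda}X_\alpha\times\widetilde{s_\lambda}}$ is faithful. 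For $\gamma^{-1}\Psi_1\circ\Phi$, the component of $(\gamma^{-1}\Psi_1\circ\Phi)(g)$ in the factor $X_\lambda^\beta$ equals $\pi_\lambda\Phi^{-1}\bigl(\Phi(\id{s}),\Phi(g)_\beta,\beta\bigr)$, which is $\id{s_\lambda}$ by \cref{cor:proj-cont} (using $\pi_\lambda g=\pi_\lambda(\id{s})$); hence $\pi_\lambda(\gamma^{-1}\Psi_1\circ\Phi)|_{\prod_{\alpha\neq\lambda}X_\alpha\times\widetilde{s_\lambda}}$ is constant and $\pi_{A\setminus\set{\lambda}}(\gamma^{-1}\Psi_1\circ\Phi)|_{\prod_{\alpha\neq\lambda}X_\alpha\times\widetilde{s_\lambda}}$ is faithful.

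The delicate point — and essentially the only place the hypotheses bite — is matching, in each of the four cases, the frozen family of coordinates with the instance of \cref{cor:proj-cont} being invoked: the coordinates $\alpha\neq\lambda$ are frozen when the domain is the $\lambda$-slice $X^s_\lambda$, whereas the single coordinate $\lambda$ is frozen when the domain is $\prod_{\alpha\neq\lambda}X_\alpha\times\widetilde{s_\lambda}$, and in both cases one applies \cref{cor:proj-cont} with $f^1=\id{s}$ precisely so that its hypothesis $\pi_\kappa f^1=\pi_\kappa f^2$ holds for the relevant index $\kappa$. This is also where loop-freeness and connectedness are genuinely needed: for an arbitrary isomorphism of products, coordinate-projections restricted to a slice need not be faithful (a coordinate-permuting isomorphism already fails it), and it is the slice structure together with \cref{cor:proj-cont} that rescues the argument. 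Everything else is unwinding the definitions of $\Psi_1$, $\Psi_2$, $\gamma$ and using functoriality.
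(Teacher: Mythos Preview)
Your proposal is correct and follows essentially the same route as the paper: both arguments use \cref{cor:proj-cont} (with $f^1=\id{s}$) to show that on each restricted domain the complementary block of coordinates is sent to identities, and then deduce faithfulness of the remaining block from faithfulness of the whole isomorphism. The only cosmetic difference is that you package all four cases uniformly via the ``constant complement $\Rightarrow$ faithful remainder'' observation, whereas the paper shortcuts the two $\Psi_2$ cases by invoking \cref{prop:iso-psi} directly (each $\pi_\alpha\Psi_2$ is already an isomorphism, so its restriction is faithful without needing the constancy argument).
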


\begin{proof}
    \begin{itemize}
        \item $\Psi_1 \circ \Phi = \biggl(\prod_{\beta \in B} \Phi^{-1} \circ \Xi^{\Phi(s)}_\beta \biggr)\circ \Phi$ \\
              Let $f = (f_\alpha)_{\alpha \in A} \in X^s_\lambda$,
              then $\pi_\alpha f = \pi_\alpha \id{s}$ for all $\alpha \neq \lambda$.
              Thus, by \cref{cor:proj-cont}, for all $\beta \in B$ and all $\alpha \neq \lambda$,
              \[\pi_\alpha \Phi^{-1}(\Phi(\id{s}),\Phi(f)_\beta,\beta) = \pi_\alpha \id{s}\]
              \ie $\pi_\alpha \pi_\beta \Psi_1 \circ \Phi (f) = \pi_\alpha \id{s}$.
              Thus
              \begin{align*}
                  \Psi_1(f) \circ \Phi
                                      & = (\id{s},\bigl(\pi_\beta  \Psi_1(f)\bigr)_\lambda ,\lambda)_{\beta \in B} \\
                                      & = (\Xi^s_\lambda(\pi_\lambda\pi_\beta  \Psi_1(f)))_{\beta \in B}           \\
                                      & = \biggl(\prod_{\beta \in B}\Xi^s_\lambda\biggr)
                  (\pi_\lambda\pi_\beta  \Psi_1(f))_{\beta \in B}                                                  \\
                  \Psi_1\circ \Phi(f) & = \biggl(\prod_{\beta \in B}\Xi^s_\lambda\biggr) \circ
                  \biggl(\pi_\lambda \gamma^{-1} {\Psi_1\circ \Phi}\biggr) (f)
              \end{align*}

              This is true for all $f \in X^s_\lambda$, thus: $$\Psi_1\circ \Phi|_{X^s_\lambda} = \biggl(\prod_{\beta \in B}\Xi^s_\lambda\biggr)
                  \circ \biggl(\pi_\lambda \gamma^{-1} {\Psi_1\circ \Phi}\biggr)|_{X^s_\lambda}$$
              By faithfulness of $\Psi_1\circ \Phi$ (\cref{prop:iso-psi}) and $\Xi^s_\lambda$ (\cref{lem:iso-slices}),
              this implies $\pi_\lambda \gamma^{-1} {\Psi_1\circ \Phi}|_{X^s_\lambda}$ faithful.\\

              Now let $f \in (f_\alpha)_{\alpha \in A} \in \prod_{\alpha \neq \lambda }X_\alpha \times \widetilde{s_\lambda}$,
              then $\pi_\lambda f = \pi_\lambda \id{s}$. By \cref{cor:proj-cont}, this implies for all
              $\beta \in B$
              \[\pi_\lambda \Phi^{-1}(\Phi(\id{s}),\Phi(f)_\beta,\beta) = \pi_\lambda \id{s}\]
              \ie $\pi_\lambda\pi_\beta \Psi_1\circ \Phi (f) = \pi_\lambda \id{s}$
              such that
              \begin{align*}
                  \Psi_1\circ \Phi(f)
                                      & = (\pi_\beta  \Psi_1\circ \Phi(f), {\id{s}}_\lambda,\lambda)_{\beta \in B}                                      & \eqcref{cor:proj-cont} \\
                                      & = ((\pi_\alpha\pi_\beta \Psi_1\circ \Phi(f))_{\alpha\in A}, {\id{s}}_\lambda,\lambda)_{\beta \in B}                                               \\
                  \Psi_1\circ \Phi(f) & = ((\pi_\beta\pi_\alpha \gamma^{-1} \Psi_1\circ \Phi(f))_{\alpha\in A}, {\id{s}}_\lambda,\lambda)_{\beta \in B}
              \end{align*}
              Faithfulness of $\Psi_1\circ \Phi$ then implies the faithfulness of $((\pi_\beta \pi_\alpha \gamma^{-1}\Psi_1\circ \Phi(.))_{\alpha \neq \lambda})_{\beta \in B}$, \ie
              the faithfulness of $\pi_{\alpha \neq \lambda} \gamma^{-1}\Psi_1\circ \Phi$ on the subcategory $\prod_{\alpha \neq \lambda }X_\alpha \times \widetilde{s_\lambda}$

        \item  $\Psi_2 =
                  \prod_{\alpha \in A} \biggl(\prod_{\beta \in B}\biggl( \pi_\alpha\circ  \Phi^{-1} \circ \Xi^{\Phi(s)}_\beta \biggr)
                  \circ \Phi \circ \Xi^s_\alpha \biggr)$ \\
              By \cref{prop:iso-psi}, each $\pi_\alpha\Psi_2$ is an isomorphism, thus a fortiori, the restriction $\pi_\alpha\Psi_2|_{X_\alpha^s}$ is faithful. Hence,
              $\pi_{\set{\alpha \in A | \alpha\neq\lambda}}\Psi_2|_{\prod_{\alpha \neq \lambda }X_\alpha \times \widetilde{s_\lambda}}$ is faithful as a product of faithful functors.
    \end{itemize}
\end{proof}

\begin{proposition}
    \label{prop:final}
    With the previous notation, $\Psi_1 \circ \Phi = \gamma \circ \Psi_2$
\end{proposition}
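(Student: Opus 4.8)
The plan is to reduce the identity of functors $\Psi_1\circ\Phi=\gamma\circ\Psi_2$ to a single per-coordinate statement, and then settle that statement with one use each of \cref{cor:proj-cont} and \cref{lem:ext-iso}. First I would post-compose with the projections $\pi_\beta\pi_\alpha$ and unwind the definitions of $\Psi_1,\Psi_2,\gamma$ from \cref{prop:iso-psi}: writing $\Gamma_{\alpha,\beta}\coloneqq\pi_\alpha\Phi^{-1}\Xi^{\Phi(s)}_\beta\pi_\beta\Phi\colon\prod_{\gamma\in A}X_\gamma\to X^\beta_\alpha$, a direct computation gives $\pi_\beta\pi_\alpha\,\gamma^{-1}\Psi_1\Phi=\Gamma_{\alpha,\beta}$ and $\pi_\beta\pi_\alpha\Psi_2=\Gamma_{\alpha,\beta}\circ\Xi^s_\alpha\circ\pi_\alpha$. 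Hence $\Psi_1\circ\Phi=\gamma\circ\Psi_2$ holds if and only if, for every $\alpha\in A$ and $\beta\in B$, the functor $\Gamma_{\alpha,\beta}$ depends only on the $\alpha$-th coordinate of its argument (and in that case the common value $\Gamma_{\alpha,\beta}(f)=\Gamma_{\alpha,\beta}\bigl(\Xi^s_\alpha(f_\alpha)\bigr)$ is automatically $\pi_\beta\pi_\alpha\Psi_2(f)$).

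Fix $\alpha$ and $\beta$. I would first observe that $\Gamma_{\alpha,\beta}$ is constant, with value $\id{s_\alpha}$, on every morphism $m$ of $\prod_\gamma X_\gamma$ satisfying $\pi_\alpha m=\id{s_\alpha}$. Indeed $\pi_\alpha m=\id{s_\alpha}=\pi_\alpha\id{s}$, so \cref{cor:proj-cont} (applied with $f^1=\id{s}$, $f^2=m$, and indices $\alpha$ and $\beta$) yields $\pi_\alpha\Phi^{-1}\bigl(\Phi(\id{s}),\,\Phi(m)_\beta,\,\beta\bigr)=\pi_\alpha\id{s}=\id{s_\alpha}$, and the left-hand side is exactly $\Gamma_{\alpha,\beta}(m)$ because $\Xi^{\Phi(s)}_\beta\pi_\beta\Phi(m)=\bigl(\Phi(\id{s}),\Phi(m)_\beta,\beta\bigr)$ by \cref{def:section}. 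Next I would split the source of $\gamma^{-1}\Psi_1\Phi$ as $X_\alpha\times\prod_{\gamma\neq\alpha}X_\gamma$ and its target $\prod_{\gamma'\in A}\prod_{\beta'\in B}X^{\beta'}_{\gamma'}$ as $X^\beta_\alpha\times\mathcal V$; since a factor of a product of connected loop-free categories is again connected and loop-free, and $\gamma^{-1}\Psi_1\Phi$ is an isomorphism by \cref{prop:iso-psi}, this presents $\gamma^{-1}\Psi_1\Phi$ as an isomorphism of connected loop-free categories $X_\alpha\times\prod_{\gamma\neq\alpha}X_\gamma\to X^\beta_\alpha\times\mathcal V$ whose projection onto the first factor is $\Gamma_{\alpha,\beta}$. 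Applying \cref{lem:ext-iso} with $p=\id{s_\alpha}\in X_\alpha$ and with the (connected) category $\prod_{\gamma\neq\alpha}X_\gamma$ itself as the distinguished connected subcategory, the hypothesis being the constancy just established, one obtains that $\Gamma_{\alpha,\beta}(p',q)$ is independent of $q\in\prod_{\gamma\neq\alpha}X_\gamma$ for every $p'\in X_\alpha$, i.e. $\Gamma_{\alpha,\beta}$ depends only on its $\alpha$-th coordinate. As $\alpha,\beta$ were arbitrary, the reduction above finishes the proof, and the argument is insensitive to the cardinality of $A$.

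This is a short-cut through the staged argument outlined after Diagram~\ref{diag:hashi-small}: there one instead enlarges the subcategory of agreement in the steps Diagram~\ref{diag:hashi-small} $\to$ Diagram~\ref{diag:hashi-al-end} $\to$ all of $\prod_\alpha X_\alpha$, each step using \cref{lem:ext-iso} to slide a base point, and \cref{lem:faith-restriction} serves to promote the object-level agreement — which comes from the poset strict refinement property via \cref{prop:iso-restriction} — to an honest equality of functors, a promotion that is free in $\poset$ but not in $\lfcat$. I expect the only real difficulty to be getting the first-paragraph reduction exactly right: carrying the substitution notation of \cref{def:slice} cleanly through \cref{def:section} and through the explicit forms of $\Psi_1$ and $\Psi_2$, and then recognising that the output of \cref{cor:proj-cont} at the base morphism is precisely the hypothesis that \cref{lem:ext-iso} consumes. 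The side conditions — connectedness and loop-freeness of all the factors that occur — are routine.
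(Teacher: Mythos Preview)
Your reduction and the single application of \cref{lem:ext-iso} are correct, and the argument is genuinely shorter than the one the paper gives. The unwinding of the projections is right: with $\Gamma_{\alpha,\beta}=\pi_\alpha\Phi^{-1}\Xi^{\Phi(s)}_\beta\pi_\beta\Phi$ one has $\pi_\beta\pi_\alpha\gamma^{-1}\Psi_1\Phi=\Gamma_{\alpha,\beta}$ and $\pi_\beta\pi_\alpha\Psi_2=\Gamma_{\alpha,\beta}\Xi^s_\alpha\pi_\alpha$, so the whole statement is equivalent to $\Gamma_{\alpha,\beta}$ factoring through $\pi_\alpha$. Your use of \cref{cor:proj-cont} with $f^1=\id{s}$ and $f^2=m$ yields $\Gamma_{\alpha,\beta}(\id{s_\alpha},q)=\id{s_\alpha}$ for every $q$, which is exactly the hypothesis of \cref{lem:ext-iso} for the isomorphism $\gamma^{-1}\Psi_1\Phi$ (split as $X_\alpha\times\prod_{\gamma\neq\alpha}X_\gamma\to X^\beta_\alpha\times\mathcal V$) with $p=\id{s_\alpha}$ and $Q'=Q$. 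The conclusion of \cref{lem:ext-iso} is already at the level of morphisms, so specialising $q'=(\id{s_\gamma})_{\gamma\neq\alpha}$ gives $\Gamma_{\alpha,\beta}(f)=\Gamma_{\alpha,\beta}(\Xi^s_\alpha f_\alpha)$ on the nose.

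The paper proceeds differently. It first proves \cref{prop:restriction-psi-eq}, establishing that both $\Psi_2$ and $\gamma^{-1}\Psi_1\Phi$ agree on the subcategory $\prod_{\alpha}X^\mu_\alpha$ for an arbitrary $\mu\in B$; it then invokes \cref{lem:ext-iso} twice, once to enlarge the region of agreement along a chosen coordinate $\lambda$ (Diagram~\ref{diag:hashi-al-end}) and once more to reach all of $\prod_\alpha X_\alpha$. At each stage it must upgrade an equality that is a priori only visible through certain projections to an equality of functors, and this is where \cref{lem:faith-restriction} is used. Your argument sidesteps both \cref{prop:restriction-psi-eq} and \cref{lem:faith-restriction}: by anchoring at the identity $\id{s_\alpha}$ you feed \cref{cor:proj-cont} directly into \cref{lem:ext-iso} with $Q'=Q$, so no intermediate subcategory and no separate faithfulness check are needed. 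What the paper's staged route buys is a closer parallel with Schr\"oder's poset proof and explicit intermediate diagrams; what your route buys is brevity and the elimination of one auxiliary lemma.
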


\begin{proof}
    Let $\Psi \in \{\Psi_2, \gamma^{-1}\circ \Psi_1 \circ \Phi\}$ and $\mu \in B$.
    Let $f^\mu_\lambda \in X_\lambda^\mu$.
    By \cref{prop:restriction-psi-eq}, $\Psi_2$ and $\gamma^{-1}\circ \Psi_1 \circ \Phi$ are equal when we restrict to $\prod_{\alpha\in A} X^\mu_\alpha$.
    As explained above, we wish to extend this equality to the full domain.
    \cref{prop:restriction-psi-eq} also implies that for all $g^\mu \in \prod_{\alpha \in A}X^\mu_\alpha$,
    $\pi_\lambda\Psi(g^\mu,f^\mu_\lambda,\lambda) = \pi_\lambda\Psi(s,f^\mu_\lambda,\lambda)$.
    Thus, by \cref{lem:ext-iso}, for all $f_\lambda \in X_\lambda$, for all $g^\mu \in \prod_{\alpha \in A}X^\mu_\alpha$
    \begin{align}
        \label{eq:indep}
         & \pi_\lambda\Psi(g^\mu,f_\lambda,\lambda) = \pi_\lambda\Psi(s,f_\lambda,\lambda) = \pi_\lambda\Psi\Xi^s_\lambda(f_\lambda) \overset{def}{\coloneqq} \Psi_\lambda(f_\lambda)
    \end{align}
    We want to prove that for all $\lambda \in A$, $\mu\in B$, $g^\mu \in \prod_{\alpha\in A} X_\alpha^\mu$, $f_\lambda \in X_\lambda$,
    the projection on any $\alpha \neq \lambda$ of $\Psi(g^\mu,f_\alpha,\alpha)$
    depends only on $g^\mu$, \ie
    \begin{align*}
        (g^\mu,f_\lambda,\lambda) & = \Psi^{-1}(\Psi(g^\mu),\Psi_\lambda(f_\lambda),\lambda)
    \end{align*}
    \begin{itemize}
        \item For $\alpha \neq \lambda$,
              $\pi_\alpha\Psi^{-1} \Psi(g^\mu,f_\lambda,\lambda) = g^\mu_\alpha = \pi_\alpha \Psi^{-1} \Psi(g^\mu)$. Thus, by \cref{cor:proj-cont},
              \begin{equation*}
                  \pi_\alpha \Psi^{-1}(\Psi(g^\mu),\Psi(g^\mu,f_\lambda,\lambda)_\lambda,\lambda) =  g^\mu_\alpha
              \end{equation*}
              By definition of $\Psi_\lambda(f_\lambda)$ (\cref{eq:indep})
              \begin{align}
                  \label{eq:proj-1}
                  \pi_\alpha \Psi^{-1}(\Psi(g^\mu),\Psi_\lambda(f_\lambda),\lambda) & = g^\mu_\alpha
              \end{align}
        \item For $\lambda$,
              By \cref{eq:proj-1}, $\Psi^{-1}(\Psi(g^\mu),\Psi_\lambda(f_\lambda),\lambda) \in (X^\mu_\alpha, X_\lambda,\lambda)$, such that
              by \cref{eq:indep}
              \begin{align*}
                  \pi_\lambda \Psi \Xi^s_\lambda( \pi_\lambda \Psi^{-1}(\Psi(g^\mu),\Psi_\lambda(f_\lambda),\lambda)) & = \pi_\lambda
                  \Psi\Psi^{-1}(\Psi(g^\mu),\Psi_\lambda(f_\lambda),\lambda)                                          &                                              &                              \\
                                                                                                                      & = \Psi_\lambda(f_\lambda)                    &                            & \\
                  \pi_\lambda \Psi \Xi^s_\lambda( \pi_\lambda \Psi^{-1}(\Psi(g^\mu),\Psi_\lambda(f_\lambda),\lambda)) & = \pi_\lambda \Psi \Xi^s_\lambda (f_\lambda)
                                                                                                                      &                                              & \eqcref{eq:indep}
              \end{align*}
              By faithfulness of $\pi_\lambda \Psi \Xi^s_\lambda$ (\cref{lem:faith-restriction}),
              \begin{equation}
                  \label{eq:proj-2}
                  \pi_\lambda \Psi^{-1}(\Psi(g^\mu),\Psi_\lambda(f_\lambda),\lambda) = f_\lambda
              \end{equation}
    \end{itemize}
    By \cref{eq:proj-1} and \cref{eq:proj-2}, we have $
        (g^\mu,f_\lambda,\lambda) = \Psi^{-1}(\Psi(g^\mu),\Psi_\lambda(f_\lambda),\lambda)$ \ie
    \begin{align}
        \label{eq:first-step-extension}
        \Psi(g^\mu,f_\lambda,\lambda) & = (\Psi(g^\mu),\Psi_\lambda(f_\lambda),\lambda)
    \end{align}
    So far \cref{eq:first-step-extension} only holds when $g^\mu \in \prod_{\alpha\in A} X^\mu_\alpha$. Let us prove
    that it is in fact valid for all
    $g \in \prod_{\alpha \in A} X_\alpha$, \ie for all
    $\lambda\in A$ and for all $f_\lambda \in X_\lambda$
    \begin{align*}
        \Psi(g,f_\lambda,\lambda) & = (\Psi(g),\Psi_\lambda(f_\lambda),\lambda)
    \end{align*}
    Let $\rho \in A$, $\rho \neq \lambda$.
    By \cref{eq:first-step-extension}, for all $f_\lambda \in X_\lambda$,
    $\pi_\rho\Psi(g^\mu,f_\lambda,\lambda) = \pi_\rho\Psi(g^\mu)$.
    By \cref{lem:ext-iso} this implies, for all $g\in \prod_{\alpha \in A} X_\alpha$, for all $f_\lambda \in X_\lambda$
    \begin{equation}
        \label{eq:indep-2}
        \pi_{\bar{\lambda}}\Psi(g,f_\lambda,\lambda) = \pi_{\bar{\lambda}}\Psi(g) \coloneqq \Psi_{\bar{\lambda}}(g)
    \end{equation}
    where $\pi_{\bar{\lambda}}$ is the projection on $A\setminus\{\lambda\}$.\\
    Thus, proving $\Psi(g,f_\lambda,\lambda) = (\Psi(g),\Psi_\lambda(f_\lambda),\lambda)$ is equivalent to proving,
    \[(\Psi(g,f_\lambda,\lambda),\Psi_\lambda(f_\lambda),\lambda) = \Psi(g,f_\lambda,\lambda)\] \ie
    \begin{align*}
        \Psi^{-1}(\Psi(g,f_\lambda,\lambda),\Psi_\lambda(f_\lambda),\lambda) = (g,f_\lambda,\lambda)
    \end{align*}
    Let us look at the different projections
    \begin{itemize}
        \item On $X_\lambda$. \\
              $\pi_\lambda\Psi^{-1}(\Psi(g,f_\lambda,\lambda)) = \pi_\lambda\Psi^{-1} (\Psi\Xi^s_\lambda(f_\lambda)) = f_\lambda$ implies by
              \cref{cor:proj-cont}
              \begin{align*}
                  \pi_\lambda\Psi^{-1}(\Psi(g,f_\lambda,\lambda), \pi_\lambda \Psi\Xi^s_\lambda(f_\lambda), \lambda) & = f_\lambda                                 \\
                  \pi_\lambda\Psi^{-1}(\Psi(g,f_\lambda,\lambda), \Psi_\lambda(f_\lambda), \lambda)                  & = f_\lambda \quad \eqcref{eq:indep}
              \end{align*}
        \item On $\prod{\alpha\neq \lambda} X_\alpha$\\
              By \cref{eq:indep-2}, for all $h \in \prod_{\alpha \in A} X_\alpha$, $\pi_{\bar{\lambda}} \Psi(h,{\id{s}}_\lambda,\lambda) 
              = \pi_{\bar{\lambda}}\Psi(h)$.
              Applying this to the element \(h=\Psi^{-1}(\Psi(g,f_\lambda,\lambda), \Psi_\lambda(f_\lambda), \lambda) \) we get
              \begin{align*}
                  \pi_{\bar{\lambda}}\Psi(\Psi^{-1}(\Psi(g,f_\lambda,\lambda), \Psi_\lambda(f_\lambda), \lambda),{\id{s}}_\lambda,\lambda)
                   & = \pi_{\bar{\lambda}}\Psi \Psi^{-1}(\Psi(g,f_\lambda,\lambda), \Psi_\lambda(f_\lambda), \lambda) &  &                             \\
                   & = \pi_{\bar{\lambda}}(\Psi(g,f_\lambda,\lambda), \Psi_\lambda(f_\lambda), \lambda)               &  &                             \\
                   & = \pi_{\bar{\lambda}}\Psi(g,f_\lambda,\lambda)                                                   &  &                             \\
                  \pi_{\bar{\lambda}}\Psi(\Psi^{-1}(\Psi(g,f_\lambda,\lambda), \Psi_\lambda(f_\lambda), \lambda),{\id{s}}_\lambda,\lambda)
                   & = \pi_{\bar{\lambda}}\Psi(g,{\id{s}}_\lambda,\lambda)                                            &  & \eqcref{eq:indep-2}
              \end{align*}
              By faithfulness of $\pi_{\bar{\lambda}}\Psi|_{(\prod_{\alpha\in A} X_\alpha,\tilde{s_\lambda},\lambda)}$ (\cref{lem:faith-restriction}),
              \begin{align*}
                  (\Psi^{-1}(\Psi(g,f_\lambda,\lambda), \Psi_\lambda(f_\lambda), \lambda),{\id{s}}_\lambda,\lambda) & = (g,{\id{s}}_\lambda,\lambda) \\
                  \pi_{\bar{\lambda}}\Psi^{-1}(\Psi(g,f_\lambda,\lambda), \Psi_\lambda(f_\lambda))                   & = \pi_{\bar{\lambda}} g
              \end{align*}
    \end{itemize}
    Thus $\Psi^{-1}(\Psi(g,f_\lambda,\lambda),\Psi_\lambda(f_\lambda),\lambda) = (g,f_\lambda,\lambda)$
    and as such, for all $f_\lambda \in \lambda$, for all $g\in \prod_{\alpha\in A}X_\alpha$,
    \begin{align*}
        \Psi(g,f_\lambda,\lambda) & = (\Psi(g),\Psi_\lambda(f_\lambda),\lambda)
    \end{align*}
    As this is true for any $f,g \in \prod_{\alpha\in A} X_\alpha$, and any $\lambda$, we get
    \begin{align*}
        \Psi(f) = (\Psi_\alpha(f_\alpha))_{\alpha \in A}
    \end{align*}
    All that is left to do is to prove that $\Psi_\alpha(f_\alpha)$ is the same whether $\Psi = \Psi_2$ or $\Psi=\gamma^{-1}\circ\Psi_1\circ\Phi$.
    By \cref{eq:indep}
    \begin{align*}
        \Psi_\lambda(f_\lambda) = \pi_\lambda \Psi \Xi_\lambda^s(f_\lambda)
    \end{align*}
    But, by definition
    \begin{align*}
        \pi_\beta\pi_\lambda \Psi_2 (\Xi_\lambda^s f_\lambda)                                                                                             & =
        \pi_\beta\pi_\lambda (\Phi^{-1} \circ \Xi_\beta^{\Phi(s)} \circ \pi_\beta \circ \Phi \circ \Xi^s_\lambda) (\pi_\lambda (\Xi^s_\lambda f_\lambda)) &                                                                                                                         &                                     \\
                & = \pi_\beta\pi_\lambda (\Phi^{-1} \circ \Xi_\beta^{\Phi(s)} \circ \pi_\beta \circ \Phi \circ \Xi^s_\lambda) (f_\lambda) &   & \eqcref{lem:iso-slices} \\
                & = \pi_\beta\pi_\lambda (\Phi^{-1}\Xi_\beta^{\Phi(s)}) (\pi_\beta\Phi)(\Xi^s_\lambda f_\lambda)                          &   &                                 \\
\pi_\beta\pi_\lambda \Psi_2 (\Xi_\lambda^s f_\lambda)                & = \pi_\beta\pi_\lambda \gamma^{-1}\circ \Psi_1\circ \Phi(\Xi^s_\lambda f_\lambda)                                       &   & \text{by Definition of } \Psi_1
    \end{align*}
    This is true for all $\lambda$ and $\beta$, thus
    $\Psi_2 = \gamma^{-1} \circ \Psi_1 \circ \Phi$

\end{proof}

This concludes the proof of the commutativity of the diagram in \cref{thm:big-result}.

\bibliography{sn-bibliography}


\begin{thebibliography}{6}
\ifx \bisbn   \undefined \def \bisbn  #1{ISBN #1}\fi
\ifx \binits  \undefined \def \binits#1{#1}\fi
\ifx \bauthor  \undefined \def \bauthor#1{#1}\fi
\ifx \batitle  \undefined \def \batitle#1{#1}\fi
\ifx \bjtitle  \undefined \def \bjtitle#1{#1}\fi
\ifx \bvolume  \undefined \def \bvolume#1{\textbf{#1}}\fi
\ifx \byear  \undefined \def \byear#1{#1}\fi
\ifx \bissue  \undefined \def \bissue#1{#1}\fi
\ifx \bfpage  \undefined \def \bfpage#1{#1}\fi
\ifx \blpage  \undefined \def \blpage #1{#1}\fi
\ifx \burl  \undefined \def \burl#1{\textsf{#1}}\fi
\ifx \doiurl  \undefined \def \doiurl#1{\url{https://doi.org/#1}}\fi
\ifx \betal  \undefined \def \betal{\textit{et al.}}\fi
\ifx \binstitute  \undefined \def \binstitute#1{#1}\fi
\ifx \binstitutionaled  \undefined \def \binstitutionaled#1{#1}\fi
\ifx \bctitle  \undefined \def \bctitle#1{#1}\fi
\ifx \beditor  \undefined \def \beditor#1{#1}\fi
\ifx \bpublisher  \undefined \def \bpublisher#1{#1}\fi
\ifx \bbtitle  \undefined \def \bbtitle#1{#1}\fi
\ifx \bedition  \undefined \def \bedition#1{#1}\fi
\ifx \bseriesno  \undefined \def \bseriesno#1{#1}\fi
\ifx \blocation  \undefined \def \blocation#1{#1}\fi
\ifx \bsertitle  \undefined \def \bsertitle#1{#1}\fi
\ifx \bsnm \undefined \def \bsnm#1{#1}\fi
\ifx \bsuffix \undefined \def \bsuffix#1{#1}\fi
\ifx \bparticle \undefined \def \bparticle#1{#1}\fi
\ifx \barticle \undefined \def \barticle#1{#1}\fi
\bibcommenthead
\ifx \bconfdate \undefined \def \bconfdate #1{#1}\fi
\ifx \botherref \undefined \def \botherref #1{#1}\fi
\ifx \url \undefined \def \url#1{\textsf{#1}}\fi
\ifx \bchapter \undefined \def \bchapter#1{#1}\fi
\ifx \bbook \undefined \def \bbook#1{#1}\fi
\ifx \bcomment \undefined \def \bcomment#1{#1}\fi
\ifx \oauthor \undefined \def \oauthor#1{#1}\fi
\ifx \citeauthoryear \undefined \def \citeauthoryear#1{#1}\fi
\ifx \endbibitem  \undefined \def \endbibitem {}\fi
\ifx \bconflocation  \undefined \def \bconflocation#1{#1}\fi
\ifx \arxivurl  \undefined \def \arxivurl#1{\textsf{#1}}\fi
\csname PreBibitemsHook\endcsname

\bibitem[\protect\citeauthoryear{Schr{\"o}der}{2003}]{schroder2003ordered}
\begin{barticle}
\bauthor{\bsnm{Schr{\"o}der}, \binits{B.S.W.}}:
\batitle{Ordered sets}.
\bjtitle{Springer}
\bvolume{29},
\bfpage{30}
(\byear{2003})
\end{barticle}
\endbibitem

\bibitem[\protect\citeauthoryear{Hashimoto}{1951}]{hashimoto1951direct}
\begin{botherref}
\oauthor{\bsnm{Hashimoto}, \binits{J.}}:
On direct product decomposition of partially ordered sets.
Annals of Mathematics,
315--318
(1951)
\end{botherref}
\endbibitem

\bibitem[\protect\citeauthoryear{Dijkstra}{1968}]{dijkstra_1968}
\begin{bchapter}
\bauthor{\bsnm{Dijkstra}, \binits{E.W.}}:
\bctitle{{Cooperating sequential processes}}.
In: \beditor{\bsnm{Genuys}, \binits{F.}} (ed.)
\bbtitle{{Programming Languages: NATO Advanced Study Institute}}.
\bsertitle{{proceedings of the summer school held at Villars-de-Lans, 1966}},
pp. \bfpage{43}--\blpage{112}.
\bpublisher{Academic Press}, \blocation{???}
(\byear{1968}).
\bcomment{Reprint of the eponymous technical report published in september 1965
  by the Technological University of Eindhoven, The Netherlands. Also in
  Hansen, P. B., editor, \emph{The Origin of Concurrent Programming}, Springer
  2002.}
\end{bchapter}
\endbibitem

\bibitem[\protect\citeauthoryear{Haucourt}{2018}]{haucourt_2018}
\begin{barticle}
\bauthor{\bsnm{Haucourt}, \binits{E.}}:
\batitle{The geometry of conservative programs}.
\bjtitle{Mathematical Structures in Computer Science}
\bvolume{28}(\bissue{10}),
\bfpage{1723}--\blpage{1769}
(\byear{2018})
\doiurl{10.1017/S0960129517000226}
\end{barticle}
\endbibitem

\bibitem[\protect\citeauthoryear{Fajstrup et~al.}{2016}]{datc}
\begin{bbook}
\bauthor{\bsnm{Fajstrup}, \binits{L.}},
\bauthor{\bsnm{Goubault}, \binits{{\'E}.}},
\bauthor{\bsnm{Haucourt}, \binits{E.}},
\bauthor{\bsnm{Mimram}, \binits{S.}},
\bauthor{\bsnm{Raussen}, \binits{M.}}:
\bbtitle{{Directed Algebraic Topology and Concurrency}}.
\bpublisher{Springer}, \blocation{???}
(\byear{2016}).
\doiurl{10.1007/978-3-319-15398-8}
\end{bbook}
\endbibitem

\bibitem[\protect\citeauthoryear{Haucourt}{2006}]{haucourt2006categories}
\begin{barticle}
\bauthor{\bsnm{Haucourt}, \binits{E.}}:
\batitle{Categories of components and loop-free categories}.
\bjtitle{Theory and Applications of Categories}
\bvolume{16}(\bissue{27}),
\bfpage{736}--\blpage{770}
(\byear{2006})
\end{barticle}
\endbibitem

\end{thebibliography}

\end{document}